\newtheorem{thm}{Theorem}[section]
\newtheorem{lem}[thm]{Lemma}
\newtheorem{cor}[thm]{Corollary}
\newtheorem{prop}[thm]{Proposition}
\newtheorem{ques}[thm]{Question}
\theoremstyle{definition}
\def\M{\mathcal {M}_{g,p}}
\def\MO{\mathcal {M}_{g,0}}
\def\EM{\mathcal {M}^\pm_{g,p}}
\def\EMO{\mathcal {M}^\pm_{g,0}}
\def\PM{\mathcal {PM}_{g,p}}
\def\S{\Sigma_{g,p}}
\def\Sym{\mathrm{Sym}_p}
\begin{document}

\title[On minimal generating sets for the mapping class group of a punctured surface]
{On minimal generating sets for the mapping class group of a punctured surface}

\author[N. Monden]{Naoyuki Monden}
\address{Department of Mathematics, Faculty of Science, Okayama University, Okayama 700-8530, Japan}
\email{n-monden@okayama-u.ac.jp}

\begin{abstract}
Let $\S$ be a oriented connected surface of genus $g$ with $p$ punctures. We denote by $\M$ and $\EM$ the mapping class group and the extended mapping class group of $\S$, respectively. 
In this paper, we show that $\M$ and $\EM$ are generated by two element for $g\geq 3$ and $p\geq 0$. 
\end{abstract}

\maketitle

\section{Introduction}
Let $\S$ be a surface obtained from a compact oriented connected closed surface $\Sigma_g$ of genus $g$ by removing $p$ points. 
We say that $\S$ has $p$ \textit{punctures}. 
We denote by $\M$ the \textit{mapping class group} of $\S$, i.e., the group of homotopy classes of orientation-preserving diffeomorphisms which preserve the set of punctures. 
Let $\EM$ be the extended mapping class group of $\S$, i.e., the group of isotopy class of all (including orientation-reversing) diffeomorphisms which preserve the set of punctures. 
The groups $\M$ and $\EM$ are related by the following exact sequence:
\begin{align*}
1 \to \M \to \EM \to \mathbb{Z}/2\mathbb{Z} \to 1. 
\end{align*}

Since the mapping class groups $\M$ and $\EM$ are not cyclic, any generating sets for $\M$ and $\EM$ consisting must have cardinality at least two. 
In this paper, we prove that $\M$ and $\EM$ are generated by two for $g\geq 3$. 
\begin{thm}\label{thm:1}
For $g \geq 3$ and $p \geq 0$, $\M$ is generated by two elements. 
\end{thm}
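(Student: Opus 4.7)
The plan is to construct two explicit elements of $\M$ and verify that they generate, adapting strategies previously used for closed surfaces (for example by Wajnryb and Korkmaz). I start by recalling a finite generating set: $\M$ is generated by Dehn twists along a suitable family of simple closed curves on $\S$ (a Humphries-type collection, augmented by curves enclosing small groups of punctures) together with the half-twists $h_1,\dots,h_{p-1}$ swapping consecutive pairs of punctures. The short exact sequence $1 \to \PM \to \M \to \Sym \to 1$ plays a central role: two elements of $\M$ generate if and only if their Dehn-twist components generate $\PM$ and their images generate $\Sym$.

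The candidate pair $(f_1,f_2)$ I propose has the following structure. Place the $p$ punctures on $\Sigma_g$ in a rotationally symmetric configuration and let $f_1$ be a periodic mapping class of large order whose induced action on $\Sigma_g$ cyclically permutes a chosen family of simple closed curves and whose action on the punctures is a $p$-cycle. Take $f_2$ to be a single Dehn twist (or a Dehn twist composed with one half-twist) chosen so that its iterated conjugates $f_1^{i} f_2 f_1^{-i}$ exhaust the required Dehn twists in the Humphries-type family.

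The verification splits into two parts. First, I would check pictorially that the conjugates $\{f_1^{i} f_2 f_1^{-i}\}_{i}$, together with $f_1$ itself, produce every needed Dehn twist in $\PM$; this reduces to tracking the orbit of the twisting curve under the rotation and invoking a known generating set for $\PM$. Second, I would check that the image of $\langle f_1,f_2\rangle$ in $\Sym$ is all of $\Sym$: the image already contains the $p$-cycle coming from $f_1$, and a transposition can be realized as the image of some short word involving the half-twist component of $f_2$, at which point elementary symmetric-group theory concludes.

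The main obstacle is constructing a single rotation $f_1$ that is simultaneously compatible with a Humphries-type configuration of curves (whose natural rotational orders are of the form $2g+1$ or $4g+2$) and with the $p$ punctures (whose rotation must be of order divisible by $p$). This compatibility is unlikely to hold uniformly for all $p$, so the construction will have to be adjusted case by case according to the relation between $p$ and $g$. Small or exceptional values of $p$ (in particular $p=0,1,2$) should be handled separately, either by invoking the closed-surface two-generator result directly and pushing it through the Birman exact sequence, or by modifying $f_1$ to fix the exceptional punctures while preserving its action on the curves. Once this combinatorial arrangement is in place, the reduction to the closed-surface argument carries the rest of the proof.
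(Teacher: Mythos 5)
Your overall strategy---a ``rotation'' $f_1$ whose conjugates of a single twist-like element $f_2$ sweep out a full generating family, checked against the sequence $1 \to \PM \to \M \to \Sym \to 1$---is the right general flavor, but the proposal has a genuine gap exactly where you flag the ``main obstacle,'' and that obstacle is the whole difficulty of the theorem. You require $f_1$ to be a \emph{periodic} mapping class that simultaneously (i) cyclically permutes a Humphries-type chain of curves and (ii) induces a $p$-cycle on the punctures. Such a finite-order homeomorphism is constrained by Riemann--Hurwitz-type restrictions tying its order to $g$, while acting as a $p$-cycle forces divisibility by $p$; for general $(g,p)$ no such element exists, and you offer no construction, only the hope of case-by-case adjustment. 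The paper's key idea is precisely to abandon periodicity: it takes $F=\sigma_{p-1}\cdots\sigma_1 E_{p-1}A_{2g}\cdots A_1$, an explicit product of Dehn twists and half twists that is \emph{not} of finite order but still acts as a shift, sending $a_i\mapsto a_{i+1}$, $a_{2g}\mapsto e_{p-1}$ and $\ell_k\mapsto\ell_{k+1}$, with no arithmetic compatibility condition between $g$ and $p$. The second generator is $A_2B^{-1}\sigma_2$, and the verification that conjugates of it by powers of $F$ recover all Labru\`ere--Paris generators goes through lantern relations and commutation tricks (Lemma~\ref{lem:1}). Without replacing your periodic $f_1$ by something of this kind, the first half of your verification cannot be carried out uniformly in $p$.

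A secondary problem is your treatment of small $p$. Invoking the closed-surface two-generator theorem and ``pushing it through the Birman exact sequence'' goes the wrong way: the Birman sequence presents $\mathcal{M}_{g,1}$ as an \emph{extension} of $\MO$ by a point-pushing subgroup, so generators of the quotient do not lift to generators, and you would still have to generate the kernel. The paper instead settles $p\geq 7$ first (where $F^{-3}(a_2,b,\ell_2)=(a_5,c_2,\ell_5)$ makes the conjugation argument work) and then descends to $0\leq p\leq 6$ via the puncture-forgetting \emph{surjection} $\mathcal{M}_{g,p}\to\mathcal{M}_{g,p-1}$, under which images of generators are generators. Finally, your claim that two elements generate ``if and only if their Dehn-twist components generate $\PM$ and their images generate $\Sym$'' is not well formed (elements of $\M$ have no canonical Dehn-twist component) and should be replaced by the correct criterion: a subgroup equals $\M$ iff it surjects onto $\Sym$ and contains $\PM$.
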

\begin{thm}\label{thm:10}
For $g \geq 3$ and $p \geq 0$, $\EM$ is generated by two elements. 
\end{thm}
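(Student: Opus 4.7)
\medskip

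\noindent\textbf{Proof plan for Theorem~\ref{thm:10}.}
The plan is to combine Theorem~\ref{thm:1} with the short exact sequence
\[
1 \to \M \to \EM \to \mathbb{Z}/2\mathbb{Z} \to 1
\]
and to replace one of the two generators of $\M$ by an orientation-reversing element that still, together with the remaining generator, captures all of $\EM$. Fix generators $f_1, f_2 \in \M$ supplied by Theorem~\ref{thm:1}, and fix an orientation-reversing involution $r \in \EM \setminus \M$, namely a reflection of $\S$ across an axis of symmetry that preserves the puncture set (such a reflection exists for every $g \geq 1$ and $p \geq 0$ by placing the punctures symmetrically along the reflection axis). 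Then $\EM = \langle f_1,f_2,r\rangle$, and the task is to trim this to a two-element generating set.

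The approach is to consider the pair $\{f_2,\, rf_1\}$: the element $rf_1$ is orientation-reversing, so the subgroup $H := \langle f_2, rf_1\rangle$ already surjects onto $\EM/\M \cong \mathbb{Z}/2\mathbb{Z}$. Therefore it suffices to show $H \cap \M = \M$, i.e.\ that $H$ contains both $f_1$ and $f_2$. Since $f_2 \in H$ by construction, the whole reduction boils down to proving $f_1 \in H$, which in turn is equivalent to $r \in H$. The natural words to inspect are $(rf_1)^2 = (rf_1 r^{-1}) f_1$ and $(rf_1)\, f_2\, (rf_1)^{-1} = r\bigl(f_1 f_2 f_1^{-1}\bigr)r^{-1}$; both lie in $\M$ and must, together with $f_2$, generate $\M$ if the scheme is to succeed.

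The main obstacle, and the step that dictates the choice of $r$, is the computation of the conjugates $r f_i r^{-1}$ inside $\M$. To handle this, I would revisit the proof of Theorem~\ref{thm:1} and choose the two generators $f_1, f_2$ so that the underlying geometric configuration (a collection of simple closed curves with certain intersection and symmetry properties) is preserved by a fixed reflection $r$. With such a symmetric choice, $r f_1 r^{-1}$ and $r f_2 r^{-1}$ become explicit words in $f_1, f_2$ and their inverses, typically of the form $f_1^{\pm 1}$ or $(f_1 f_2)^{k} f_i^{\pm 1}(f_1 f_2)^{-k}$, which makes the verification $f_1 \in H$ a finite combinatorial check. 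The residual difficulty is a case analysis in small genus and small $p$ (in particular $p=0,1$ and $g=3$), where the symmetric configuration may need to be adjusted so that the reflection really preserves the puncture set; these low-complexity cases I would handle by exhibiting the symmetric curve system explicitly.
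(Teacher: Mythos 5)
Your overall strategy is exactly the one the paper follows: keep one generator $f_2=A_2B^{-1}\sigma_2$ of $\M$, replace the other, $f_1=F$, by $Rf_1$ for a reflection $R$ chosen so that the curves and arcs $a_i$, $b$, $e_k$, $\ell_k$ underlying the construction are $R$-invariant, and reduce to showing $\langle f_2,Rf_1\rangle\supseteq\M$. That reduction is correct. The genuine gap is that this containment is the entire content of the theorem, and the mechanism you propose for establishing it does not work as stated. Even with the symmetric choice, conjugation by $R$ sends each Dehn twist to the \emph{inverse} twist about the same curve (since $t_{R(c)}=Rt_c^{-1}R^{-1}$), so $RFR^{-1}=\sigma_{p-1}^{-1}\cdots\sigma_1^{-1}E_{p-1}^{-1}A_{2g}^{-1}\cdots A_1^{-1}$: the factors are inverted but their order is not reversed, so this is neither $F^{\pm1}$ nor any evident short word of the form $(f_1f_2)^kf_i^{\pm1}(f_1f_2)^{-k}$. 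Likewise $Rf_2R^{-1}=A_2^{-1}B\sigma_2^{-1}$ is not visibly in $H$ until one already knows $H\supseteq\M$. So the ``finite combinatorial check'' you defer to is in fact the whole proof.

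What the paper does instead is rerun the entire generation argument of Section~\ref{section3} with $RF$ in place of $F$, and this is not a formal consequence of Theorem~\ref{thm:2}: conjugation by the orientation-reversing $(RF)^{-1}$ flips exponents, e.g.\ $(RF)^{-1}A_1B^{-1}(RF)=A_2^{-1}C_1$ rather than $A_2C_1^{-1}$, and $R$ does not fix the auxiliary curves $c_2,d_1$ (it sends them to the mirror curves $\overline{c}_2,\overline{d}_1$), which forces a different lantern configuration ($\overline{B}_2A_5A_3A_1=\overline{B}\,\overline{E}\,\overline{D}_1$ on the curves $\overline{b},\overline{b}_2,\overline{d}_1,\overline{e}$) in the analogue of Lemma~\ref{lem:1}. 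Every identity of Section~\ref{section3} has to be re-derived with these sign and curve changes; your plan would need to carry this out explicitly. A minor further difference: for $0\leq p\leq 6$ the paper does not adjust the symmetric configuration but simply applies the puncture-forgetting surjection $\mathcal{M}^{\pm}_{g,p}\to\mathcal{M}^{\pm}_{g,p-1}$ starting from the case $p=7$, which is cleaner than the case analysis you sketch.
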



We give a brief history of finding generating sets for $\M$. 
Dehn \cite{De} gave a generating set for $\MO$ consisting of $2g(g-1)$ Dehn twists. 
After that, Lickorish \cite{Li} showed that $3g-1$ Dehn twists generate $\MO$, and Humphries \cite{Hu} reduced the number to $2g+1$. 
In particular, he proved that in fact the number $2g+1$ is minimal for $g\geq 2$; i.e. $\MO$ cannot be generated by $2g$ (or less) Dehn twists. 
Johnson \cite{Jo2} proved that the $2g-1$ Dehn twists also generate $\mathcal{M}_{g,1}$. 
Labruere and Paris \cite{LP} showed that $\M$ is generated by $2g+2$ Dehn twists and $p-1$ half twists and gave a finite presentation of $\M$.

By using elements other than Dehn twists and half twists, we can obtain smaller generating sets for $\MO$. 
Lu \cite{Lu} showed that $\MO$ is generated by three elements. 
After that, Wajnryb \cite{Wa} found the smallest possible generating set for $\mathcal{M}_{g,p}$ consisting of two elements for $g\geq 1$ and $p=0,1$. 
Korkmaz \cite{Ko1} proved that one of these two generators may be taken to be a Dehn twist. 
Moreover, he also showed that $\EMO$ is generated by two elements for $g\geq 1$ and $p=0,1$. 
In the case of $p\geq 2$, Kassabov \cite{Ka} showed that $\M$ is generated by four involutions if $g > 7$ or $g = 7$ and $p$ is even, five involutions if $g > 5$ or $g = 5$ and $p$ is even, and 6 involutions if $g > 3$ or $g = 3$ and $p$ is even. 
The assumption that $p$ is even was removed by the author \cite{Mo2} for $g=7$ and $g=5$. 
Based on the idea of \cite{Ka}, the author \cite{Mo1} constructed a generating set for $\mathcal{M}_{g,p}$ consisting of three elements for $g\geq 1$.

Although $\M$ is finitely generated by Dehn twists and half twists, we don't know how the generators given in \cite{Ka}, \cite{Mo1} and \cite{Mo2} are factorized into them. 
On the other hand, the generators appeared in Theorem~\ref{thm:1} has explicit factorizations.

The outline of the paper is as follows. 
In Section~\ref{section2}, we present some fundamental results on mapping class groups. 
The proofs of Theorems~\ref{thm:1} and~\ref{thm:10} 
are given in Sections~\ref{section3} and~\ref{section4}, respectively. 
The last section contains further remarks on various generating sets for $\M$ and $\EM$. 

\vspace{0.1in}
\noindent \textit{Acknowledgements.} 
The author would like to thank S. Hirose for telling him many results on generators for the mapping class groups and M. Pamuk for pointing out the reference \cite{APY}. 
The author was supported by Grant-in-Aid for Scientific Research (C) (No. 20K03613), Japan Society for the Promotion of Science.

\section{Preliminaries}\label{section2}
%
Let $t_c$ be the right-handed Dehn twist about a simple closed curve $c$ on $\Sigma_{g,p}$, that is, the isotopy class of the diffeomorphism obtained by cutting $\S$ along $c$, twisting one of the side by $2\pi$ to the right and gluing two sides of $c$ back to each other. 
Let $\sigma_\ell$ be the right-handed half twist along the simple arc $\ell$ joining two punctures $x_i$ and $x_j$ of $\S$, that is, the isotopy class of the diffeomorphism supported in a small regular neighborhood of $N(\ell \cup x_i \cup x_j)$ which leaves $\ell$ invariant and interchanges $x_i$, $x_j$, such that $\sigma_\ell^2$ is the right handed Dehn twist about $\partial N(\ell \cup x_i \cup x_j)$. 
For $f_1,f_2 \in \M$, the composition $f_2f_1$ means that $f_1$ is applied first. 
We recall the basic facts about $\M$. 
More details can be found in \cite{FM}. 
\begin{itemize}
\item If a simple closed curve $c$ on $\S$ is homotopic to a puncture, then $t_c=1$. 
\item For a self-diffeomorphism $f$ of $\S$ and a simple closed curve $c$ on $\S$, we have the relation $t_{f(c)} = ft_c^{\varepsilon}f^{-1}$ in $\M$, where $\varepsilon = \pm 1$ depending on whether $f$ is orientation-preserving or orientation-reversing. 
Similarly, for a simple arc $\ell$ on $\S$, we have the relation $\sigma_{f(\ell)} = f\sigma_\ell^{\varepsilon}f^{-1}$. 
\item For two simple closed curves $a,a'$ on $\S$ which are disjoint from each other, the \textit{commutative relation} $t_a t_{a'} = t_{a'} t_a$ holds. 
Similarly, for two simple arcs $\ell,\ell'$ on $\S$ which are disjoint from each other, $\sigma_\ell \sigma_{\ell'} = \sigma_{\ell'} \sigma_{\ell}$. 
Moreover, if $a$ is disjoint from $\ell$, then $t_a \sigma_\ell = \sigma_\ell t_a$. 
\item  Let $x,y,z$ be the interior curves on a subsurface of genus $0$ with four boundary curves $a,b,c,d$ in $\S$ as in Figure~\ref{lanterncurves}. 
Then, the \textit{lantern relation} $t_d t_c t_b t_a = t_z t_y t_x$ holds. 
The lantern relation was discovered by Dehn \cite{De} and rediscovered by Johnson \cite{Jo1}. 
\begin{figure}[hbt]
  \centering
       \includegraphics[scale=.25]{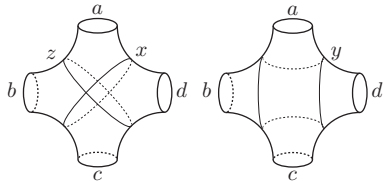}
       \caption{The simple closed curves $a,b,c,d,x,y,z$ on $\S$ for the lantern relation.}
       \label{lanterncurves}
  \end{figure}
\end{itemize}

Here, we assume that the surface of this paper is the $yz$-plane, and $\S$ is embedded in $\mathbb{R}^3$ as in Figure~\ref{curvesarcs} such that it is invariant under the reflection $R$ across the $yz$-plane. 
Therefore, the set of $p$ punctures $x_1,x_2,\ldots,x_p$ is also in the $yz$-plane as in Figure~\ref{curvesarcs}, and in particular, $R(x_k)=x_k$ for $k=1,2,\ldots,p$. 
\begin{figure}[hbt]
  \centering
       \includegraphics[scale=.45]{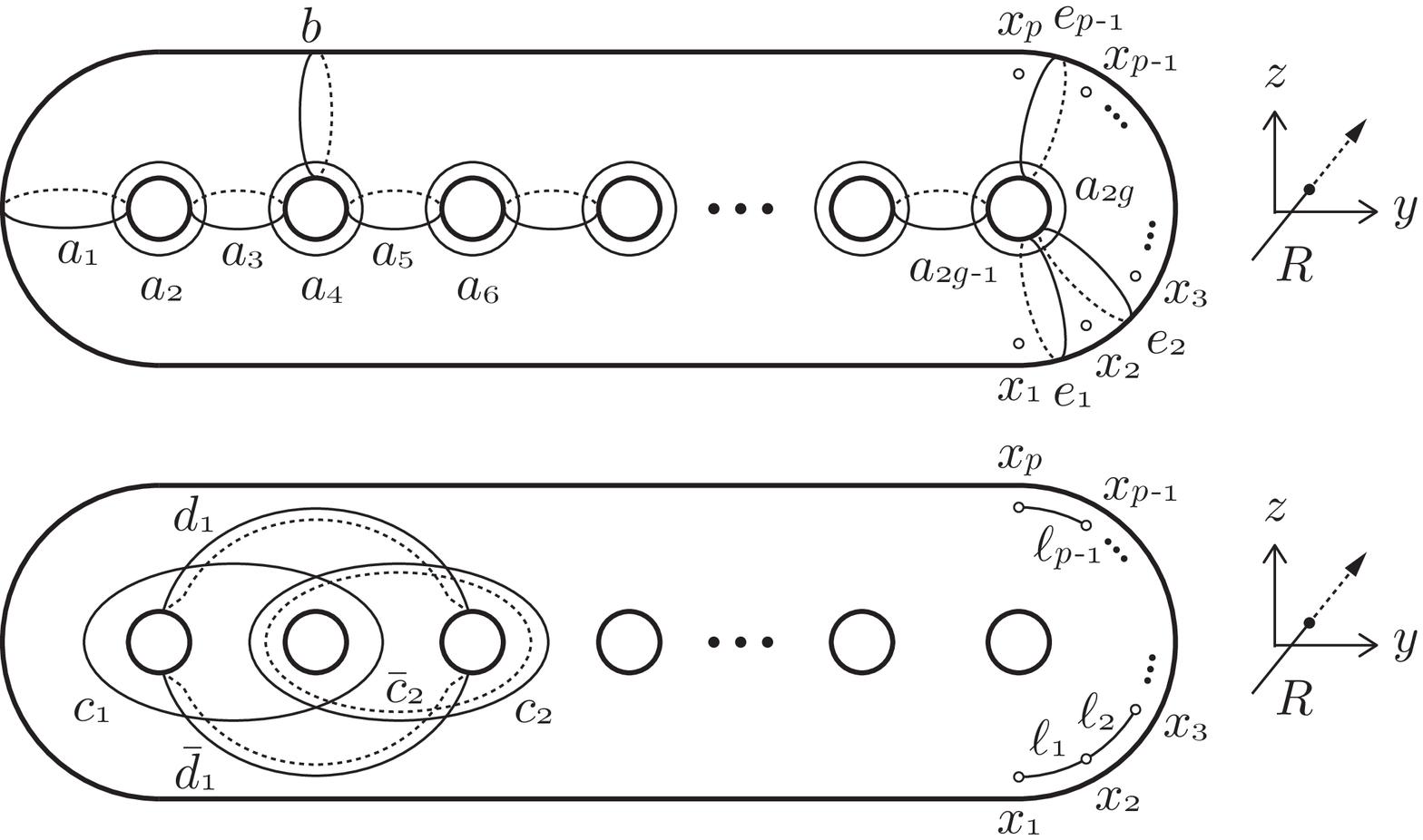}
       \caption{The reflection $R$ of $\S$, the simple closed curves $a_i$
       , $b$, $c_1,c_2,\overline{c}_2$, $d_1,\overline{d}_1$, $e_k$
        and the arc $\ell_k$ on $\S$.}
       \label{curvesarcs}
  \end{figure}

We define the simple closed curves $a_i$ ($i=1,2,\ldots,2g$), $b$, $c_1,c_2,\overline{c}_2$, $d_1,\overline{d}_1$, $e_k$ ($k=1,2,\ldots,p-1$) and the arc $\ell_k$ on $\S$ as shown in Figure~\ref{curvesarcs}. 
For simplicity, we denote the right-handed Dehn twists about $a_i, b, c_j, \overline{c}_j, d_j, \overline{d}_j, e_k$ by capital letters $A_i, B, C_j, \overline{C}_j, D_j, \overline{D}_j, E_k$ and denote the right-handed half twist about $\ell_k$ by $\sigma_k$.

The following facts in combination with the relations $t_{f(c)}=ft_c^\varepsilon f^{-1}$ and $\sigma_{f(\ell)}=f\sigma_\ell^\varepsilon f^{-1}$ are used repeatedly to prove Theorems~\ref{thm:1} and~\ref{thm:10}. 
From the definition of the reflection $R$, we see that $R(a_i)=a_i$ for $i=1,2,\ldots,2g$, $R(b)=b$, $R(e_k)=e_k$ and $R(\ell_k) = \ell_k$ for $k=1,2,\ldots,p-1$. 
Let 
\[F:=\sigma_{p-1} \cdots \sigma_2 \sigma_1 E_{p-1} A_{2g} \cdots A_{2} A_1. \]
Then, it is easy check that 
\begin{align*}
&F^{-1}(a_i) = a_{i+1}, &(RF)^{-1}(a_i) = a_{i+1}, \\
&F^{-1}(a_{2g}) = e_{p-1},& (RF)^{-1}(a_{2g}) = e_{p-1}, \\
&F^{-1}(\ell_k) = \ell_{k+1}, & (RF)^{-1}(\ell_k) = \ell_{k+1}
\end{align*}
for $i=1,2,\ldots,2g-1$ and $k=1,2,\ldots,p-3$. 
Note that $F^{-1}(\ell_{p-2}) \neq \ell_{p-1}$ since $\ell_{p-1}$ intersects $e_{p-1}$ at once. 
Similarly, we see that 
\begin{align*}
&F^{-1}(b) = c_1, &(RF)^{-1}(b) = c_1, \\
&F^{-1}(c_1) = d_1,& (RF)^{-1}(c_1) = \overline{d}_1, \\
&F^{-1}(d_1) = c_2,& (RF)^{-1}(\overline{d}_1) = \overline{c}_2. 
\end{align*}


In this paper, we employ a slightly different generating set of $\M$ from that given in \cite{LP}, but they are essentially same. 
For that reason, we give a proof, but the argument is similar to that \cite{LP}. 
\begin{prop}
\label{prop:4}
For $g\geq 1$ and $p\geq 2$, $\M$ is generated by $A_1,A_2,\ldots,A_{2g}$, $B$, $E_{p-1}$ and $\sigma_1,\sigma_2,\ldots,\sigma_{p-1}$. 
\end{prop}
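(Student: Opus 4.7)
The plan is to compare with the Labruère--Paris generating set~\cite{LP}. For $g\geq 1$ and $p\geq 2$, \cite{LP} produces a generating set for $\M$ consisting of $2g+2$ Dehn twists together with the $p-1$ half twists $\sigma_1,\dots,\sigma_{p-1}$; our proposed set has the same cardinality $2g+p+1$, and the half twists as well as the Humphries-type twists $A_1,\dots,A_{2g},B$ are common to both sets. The proof therefore reduces to showing that the one remaining Dehn twist in the LP set (call its supporting curve $e'$) can be recovered inside the subgroup
\[
G:=\langle A_1,\dots,A_{2g},B,E_{p-1},\sigma_1,\dots,\sigma_{p-1}\rangle.
\]

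To do this, I would exploit the fact that the half twists $\sigma_1,\dots,\sigma_{p-1}$ surject onto the symmetric group $\Sym_p$ via the natural map $\M\to\Sym_p$: their images are precisely the adjacent transpositions $(k,k+1)$. Given any simple closed curve $e'$ on $\S$ that bounds a disk containing the same number of punctures as $e_{p-1}$, there is a mapping class $h$ in the subgroup generated by the $\sigma_i$'s sending $e_{p-1}$ to $e'$ up to isotopy. The relation $t_{h(c)}=h\, t_c\, h^{-1}$ recalled in Section~\ref{section2} then yields $t_{e'}=hE_{p-1}h^{-1}\in G$. Applying this to the specific curve appearing in the LP set completes the reduction.

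The main obstacle is the explicit combinatorial bookkeeping: one must identify the curve $e'$ within the LP generating set, write down an appropriate word $h$ in the $\sigma_i^{\pm 1}$ carrying $e_{p-1}$ to $e'$, and confirm that this isotopy is exact rather than off by a stray Dehn twist around a puncture-encircling disk. If a residual Dehn twist does appear, it must be absorbed into $G$ by writing it as a square of a half twist combined with suitable Humphries twists via the lantern relation recalled in Section~\ref{section2}. In the typical case where the LP generator $t_{e'}$ differs from $E_{p-1}$ only by a permutation of puncture labels, the argument reduces to a direct conjugation without corrections, and the formulas for $F^{-1}$ and $(RF)^{-1}$ recorded just before the proposition provide the requisite explicit word.
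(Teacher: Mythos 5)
Your reduction to the Labru\`ere--Paris theorem has two gaps, one of which is fatal to the mechanism you propose. The smaller gap is the unverified claim that $A_1,\dots,A_{2g},B$ and the half twists are ``common to both sets,'' so that only a single Dehn twist $t_{e'}$ remains to be produced: the curve configuration of \cite{LP} is not the Humphries configuration plus one extra puncture-surrounding curve, and the paper deliberately does not use \cite{LP} as a black box for exactly this reason. Instead it starts from the generating set of the pure mapping class group $\PM$ by the $2g+p$ twists $A_1,\dots,A_{2g},B,E_1,\dots,E_{p-1}$ (\cite{Ge}, \cite{FM}), adjoins the half twists via the exact sequence $1\to\PM\to\M\to\Sym\to1$ (this part does match your use of the surjection onto $\Sym$), and is then left with the task of eliminating $E_1,\dots,E_{p-2}$.

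The fatal gap is in your mechanism for producing the missing twists. A conjugate $hE_{p-1}h^{-1}=t_{h(e_{p-1})}$ is always a Dehn twist about a curve of the same topological type as $e_{p-1}$, namely one bounding a disk containing $p-1$ punctures. The twists that actually have to be recovered are about curves of \emph{different} types: $e_k$ bounds a disk containing $k$ punctures, so for $k\le p-2$ no conjugation --- by half twists or by anything else --- can produce $E_k$ from $E_{p-1}$. This is not a ``stray Dehn twist'' to be absorbed after an inexact isotopy; it is the entire difficulty, and the lantern relation that you mention only as a contingency is in fact the core of the argument. Concretely, the paper applies the lantern relation to the four-holed sphere bounded by $e_j$, $e_{j+2}$ and the two curves encircling the punctures $x_{j+1},x_{j+2}$; since twists about puncture-parallel curves are trivial and the relevant separating twist equals $\sigma_{j+1}^2$, this yields $E_j=E_{j+1}\sigma_{j+1}E_{j+1}\sigma_{j+1}^{-1}\sigma_{j+1}^{2}E_{j+2}^{-1}$, and the descending induction is started by noting that $E_p$ is supported on the genus-$g$ subsurface, whose mapping class group is generated by $A_1,\dots,A_{2g},B$ \cite{Jo2}. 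Without carrying out this (or an equivalent) relation-based step, your argument does not close.
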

\begin{proof}
By $\PM$ we will denote the subgroup of $\M$ which fixes the punctures pointwise. 
We call $\PM$ the \textit{pure mapping class group} of $\S$. 
It is well-known that $\PM$ is generated by  $2g+p$ Dehn twists $A_1,A_2,\ldots,A_{2g},B$ and $E_1,E_2,\ldots,E_{p-1}$ (see, for example \cite{Ge} and also Section 4.4.4 in \cite{FM}). 
The action of $\M$ on the punctures of $\S$ gives the following short exact sequence:
\begin{align*}
1 \to \PM \to \M \to \Sym \to 1, 
\end{align*}
where $\Sym$ is the permutation group on the $p$ punctures and the last projection is given by the restriction of a diffeomorphism to its action on the punctures. 
Since the image of the half twist $\sigma_1,\sigma_2,\ldots,\sigma_{p-1}$ in $\M$ under the last projection are generators of $\Sym$, we see that 
$\M$ is generated by $A_1,A_2,\ldots,A_{2g}$, $B$, $E_{1},E_2,\ldots,E_{p-1}$ and $\sigma_1,\sigma_2,\ldots,\sigma_{p-1}$. 
Therefore, we only need to show that $E_1,E_2,\ldots,E_{p-2}$ are generated by $A_1,A_2,\ldots,A_{2g},B,E_{p-1}$ and $\sigma_1,\sigma_2,\ldots,\sigma_{p-1}$.

Let $\delta$ be the simple closed curve as in Figure~\ref{curves} which separates $\S$ into two components: the first one, denoted by $\Sigma$, is a surface of genus $g$ with one boundary component and no punctures. 
The second one is a disk with $p$ punctures. 
Let $E_p$ be the Dehn twist about the simple closed curve $e_p$ on $\Sigma$ as in Figure~\ref{curves1}. 
When we consider the mapping class group $\mathcal{M}(\Sigma)$ of $\Sigma$, i.e., the group of isotopy classes of orientation-preserving diffeomorphisms of $\S$ which restrict to the identity on $\S-\mathrm{Int}\Sigma$, $\mathcal{M}(\Sigma)$ is generated by the Dehn twists $A_1,A_2,\ldots,A_{2g}$ and $B$ (see \cite{Jo2}). 
Therefore, $E_p$ is generated by $A_1,A_2,\ldots,A_{2g},B$. 
\begin{figure}[hbt]
  \centering
       \includegraphics[scale=.45]{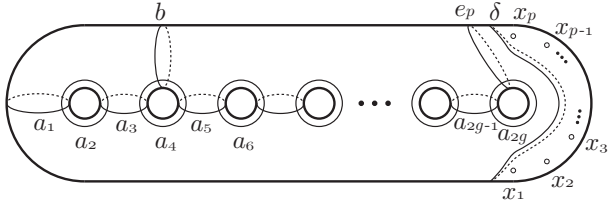}
       \caption{The simple closed curves $\delta$ and $e_p$ on $\S$.}
       \label{curves1}
  \end{figure}

Let $\delta_{p-1}$ and $\delta_{p}$ be the simple closed curves which are homotopic to the punctures $x_{p-1}$ and $x_{p}$, respectively, and let $\delta_{p-1,p}$ be the separating simple closed curve as in Figure~\ref{curves2}. 
We denote by $\Delta_{p-1},\Delta_{p},\Delta_{p-1,p}$ be the Dehn twists about $\delta_{p-1},\delta_{p},\delta_{p-1,p}$, respectively. 
Then, the lantern relation 
\begin{align*}
E_{p-2} E_p \Delta_{p} \Delta_{p-1} = E_{p-1} E_{p-1}' \Delta_{p-1,p}
\end{align*}
holds, where $E_{p-1}'$ be the Dehn twist about the simple closed curve $\sigma_{p-1}(e_{p-1})$. 
\begin{figure}[hbt]
  \centering
       \includegraphics[scale=.45]{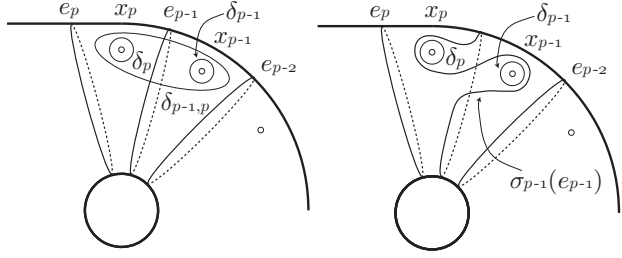}
       \caption{The simple closed curves $\delta_{p-2},\delta_{p-1},\delta_{p-2,p-1}$ and $\sigma_{p-1}(e_{p-1})$ on $\S$.}
       \label{curves2}
  \end{figure}
Since $\Delta_{p-1}=\Delta_{p}=1$, $\Delta_{p-1,p}=\sigma_{p-1}^2$ and $E_{p-1}'= t_{\sigma_{p-1}(e_{p-1})} = \sigma_{p-1} E_{p-1} \sigma_{p-1}^{-1}$, we rewrite this relation as
\begin{align*}
E_{p-2} = E_{p-1} \sigma_{p-1} E_{p-1} \sigma_{p-1}^{-1} \sigma_{p-1}^2 E_p^{-1} . 
\end{align*}
Similarly, we obtain $E_{j} = E_{j+1} \sigma_{j+1} E_{j+1} \sigma_{j+1}^{-1} \sigma_{j+1}^2 E_{j+2}^{-1}$ for $j=1,2,\ldots,p-2$. 
Inductively, we see that $E_{j}$ is generated by $A_1,A_2,\ldots,A_{2g},B,E_{p-1}$ and $\sigma_1,\sigma_2,\ldots,\sigma_{p-1}$ for all $j$ since $E_p$ is generated by $A_1,A_2,\ldots,A_{2g},B$, and the theorem follows. 
\end{proof}

\section{Proofs of Theorem~\ref{thm:1} }\label{section3}

The next lemma is needed to prove Theorem~\ref{thm:1}. 
\begin{lem}\label{lem:1}
Set $F:=\sigma_{p-1} \cdots \sigma_2 \sigma_1 E_{p-1} A_{2g} \cdots A_{2} A_1$. 
Let $G_1$ be a subgroup of $\M$ generated by $A_1 A_2^{-1}$, $A_1 B^{-1}$ and $F$. 
Then, the Dehn twists $A_1,A_2,\ldots,A_{2g},B$ and $E_{p-1}$ are in $G_1$. 
\end{lem}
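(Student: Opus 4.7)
The plan has two phases: (1) use conjugation by $F$ to harvest many ``difference'' elements $\alpha \beta^{-1} \in G_1$ among the target Dehn twists, and (2) invoke a lantern relation to convert those differences into a single individual Dehn twist, after which the remaining targets follow at once.

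\textbf{Phase 1: harvesting differences.} The preliminaries give $F^{-1} A_i F = A_{i+1}$ for $1 \le i \le 2g-1$, $F^{-1} A_{2g} F = E_{p-1}$, and $F^{-1}B F = C_1$, $F^{-2}B F^2 = D_1$, $F^{-3}B F^3 = C_2$. Since $F \in G_1$, iteratively conjugating $A_1 A_2^{-1}$ by $F^{-1}$ places
\[
A_{i} A_{i+1}^{-1} \in G_1 \text{ for } 1 \le i \le 2g-1, \qquad A_{2g} E_{p-1}^{-1} \in G_1,
\]
and telescoping then yields every $A_i A_j^{-1}$ and every $A_i E_{p-1}^{-1}$ in $G_1$. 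Conjugating $A_1 B^{-1}$ by $F^{-k}$ similarly produces $A_{k+1} X^{-1}\in G_1$ for $X \in \{B, C_1, D_1, C_2\}$; combining with the $A_i$-ratios puts $A_i B^{-1}$ (for every $i$) and hence $B E_{p-1}^{-1}$, $B C_1^{-1}$, $B D_1^{-1}$, $B C_2^{-1}$ into $G_1$.

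\textbf{Phase 2: extracting a single Dehn twist via the lantern.} I would then identify a lantern configuration on $\S$ whose four (pairwise disjoint) boundary curves are $b, a_{i_1}, a_{i_2}, a_{i_3}$ and whose three interior curves are $c_1, d_1, c_2$. Since $b$ is then disjoint from the other six curves, $B$ commutes with $A_{i_1}, A_{i_2}, A_{i_3}, C_1, D_1, C_2$, and a short manipulation using this commutativity together with the lantern relation gives
\[
(B A_{i_1}^{-1})(B A_{i_2}^{-1})(C_1 B^{-1})(D_1 B^{-1})(C_2 B^{-1}) = A_{i_3}.
\]
Each of the five factors on the left lies in $G_1$ by Phase~1, so $A_{i_3} \in G_1$. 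The remaining $A_i$'s together with $B$ and $E_{p-1}$ are then obtained by multiplying by the appropriate $A_{i_3} X^{-1}$ ratios.

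\textbf{Main obstacle.} The technical heart of the argument is the geometric identification in Phase~2: one must verify that the $F$-iterated curves $c_1, d_1, c_2$ arise as the three interior curves of a standard lantern subsurface whose boundary consists of $b$ together with three of the $a_i$'s, and that this configuration exists on $\S$ (this is where the hypothesis $g \ge 3$ enters, since it guarantees enough pairwise disjoint $a_i$'s). Phase~1 is a direct application of the $F$-orbit formulas from Section~\ref{section2}, but the verification in Phase~2 hinges on the specific arrangement depicted in Figure~\ref{curvesarcs}.
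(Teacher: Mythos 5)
Your Phase 1 is correct and coincides with the first part of the paper's argument: conjugating $A_1A_2^{-1}$ and $A_1B^{-1}$ by powers of $F$ yields $A_iA_{i+1}^{-1}$, $A_{2g}E_{p-1}^{-1}$, $A_2C_1^{-1}$, $A_3D_1^{-1}$, $A_4C_2^{-1}$, and the telescoping products you describe are legitimate. The gap is in Phase 2. There is no lantern on $\S$ with boundary curves $b, a_{i_1}, a_{i_2}, a_{i_3}$ and interior curves $c_1, d_1, c_2$: the lantern actually present in this configuration (and the one the paper uses) is $B_2A_5A_3A_1 = D_1EB$, in which $b$ and $d_1$ are two of the three \emph{interior} curves of a common four-holed sphere and hence intersect in two points. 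A boundary curve of a lantern must be disjoint from all three interior curves, so $b$ cannot bound a lantern containing $d_1$ in its interior, and your identity $(BA_{i_1}^{-1})(BA_{i_2}^{-1})(C_1B^{-1})(D_1B^{-1})(C_2B^{-1})=A_{i_3}$ has no geometric relation backing it. (More generally, the successive $F$-translates $c_1,d_1,c_2$ of $b$ are not three disjointly-bounded interior curves of any lantern with $b$ on its boundary.)

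The correct lantern forces two auxiliary curves $e$ and $b_2$ whose twists $E, B_2$ are \emph{not} among the elements harvested in Phase 1, and this is where the real work of the paper lies: one rewrites the lantern as $A_5=(D_1A_3^{-1})(EB_2^{-1})(BA_1^{-1})$ and must still show $EB_2^{-1}\in G_1$. The paper does this by producing explicit products $h$ of already-known difference elements with $h(a_1,b)=(a_1,b_2)$ and then $h'(a_5,b_2)=(e,b_2)$, so that $A_1B_2^{-1}=hA_1B^{-1}h^{-1}$ and $EB_2^{-1}=h'(A_5B_2^{-1})h'^{-1}$ lie in $G_1$. This change-of-coordinates step, which converts differences involving the chain curves into differences involving $e$ and $b_2$, is the technical heart of the lemma and is entirely absent from your proposal; as written, Phase 2 would fail.
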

\begin{proof}
By $F^{-1} (a_1,b) = (a_2,c_1)$, $F^{-1} (a_2,c_1) = (a_3, d_1)$ and $F^{-1} (a_3,d_1) = (a_4, c_2)$, 
we have $A_2 C_1^{-1} = F^{-1} A_1 B^{-1} F$, $A_3 D_1^{-1} = F^{-2} A_1 B^{-1} F^2$ and $A_4 C_2^{-1} = F^{-3} A_1 B^{-1} F^3$. 
Therefore, 
\begin{align}
A_2 C_1^{-1}, \ A_3 D_1^{-1}, \ A_4 C_2^{-1} \in G_1. \label{in1a}
\end{align}

Here, we denote by $E$ and $B_2$ the Dehn twists about simple closed curves $e$ and $b_2$ as in Figure~\ref{curves}. 
Then, the lantern relation 
\begin{align*}
B_2 A_5 A_3 A_1 = D_1 E B
\end{align*}
holds, and since $a_1,a_3,a_5,b_2$ are disjoint from each other and $d_1,e,b$ we rewrite this relation as 
\begin{align*}
A_5 = (D_1A_3^{-1}) (EB_2^{-1}) (BA_1^{-1}). 
\end{align*}
If $EB_2^{-1} \in G_1$, then we get $A_5 \in G_1$ by $A_1B^{-1} \in G_1$ (by the assumption) and $A_3D_1^{-1} \in G_1$ (by (\ref{in1a})). 
Moreover, since $F^{-i}(a_1)=a_{i+1}$ for $i=1,2,\ldots,2g-1$ and $F^{-1}(a_{2g}) = e_{p-1}$, we obtain $A_{5+j} = F^{-j}A_5F^j$ for $j=-4,-3,\ldots,2g-5$ and $E_{p-1} = F^{-2g+4}A_5F^{2g-4}$. 
Therefore, if $A_5 \in G_1$, then $A_1,A_2,\ldots,A_{2g},E_{p-1} \in G_1$ and $B = BA_1^{-1} \cdot A_1 \in G_1$. 
From the above argument, it suffices to show that $EB_2^{-1} \in G_1$. 
\begin{figure}[hbt]
  \centering
       \includegraphics[scale=.45]{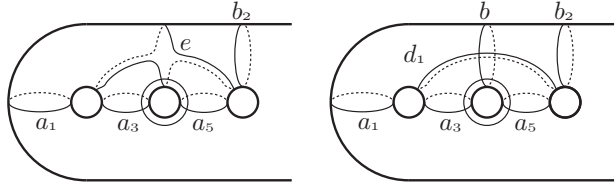}
       \caption{The simple closed curves $a_1,a_3,a_5,b_2,b,d_1,e$ on $\S$.}
       \label{curves}
  \end{figure}

We show that $EB_2^{-1} \in G_1$. 
Since $F^{-1}(a_i)=a_{i+1}$ for for $i=1,2,\ldots,2g-1$, we see that $A_{j+1}A_{j+2}^{-1} = F^{-j} A_1A_2^{-1} F^j$. 
This gives $A_1A_{j+2}^{-1} = A_1A_2^{-1} \cdot A_2A_3^{-1} \cdots A_{j+1}A_{j+2}^{-1}$. 
In particular, 
\begin{align}
A_1 A_4^{-1}, \ A_1 A_5^{-1}, \ A_1A_6^{-1} \in G_1. \label{in2a}
\end{align}
Similarly, we have $A_2A_5^{-1} = F^{-1} A_1A_4^{-1} F$. 
Since $a_1,a_2$ are disjoint from $a_5$, we obtain 
\begin{align}
A_1^{-1}A_2 = (A_1A_5^{-1})^{-1} (A_2A_5^{-1}) \in G. \label{in3a}
\end{align}
It is easy to check that 
\begin{align*}
A_6 A_1^{-1} \cdot A_5 A_1^{-1} \cdot A_4 C_2^{-1} (a_1,b) = (a_1,b_2). 
\end{align*}
Therefore, by (\ref{in1a}) and (\ref{in2a}), we obtain 
\begin{align}
A_1B_2^{-1} = (A_6 A_1^{-1} \cdot A_5 A_1^{-1} \cdot A_4 C_2^{-1}) A_1 B^{-1} (A_6 A_1^{-1} \cdot A_5 A_1^{-1} \cdot A_4 C_2^{-1})^{-1} \in G_1. \label{in4a}
\end{align}
Since $a_1$ is disjoint from $b_2$, we have 
\begin{align}
B_2^{-1}A_1 \in G. \label{in8a}
\end{align}
Using (\ref{in2a}) and (\ref{in4a}), 
\begin{align}
A_5 B_2^{-1} = A_5 A_1^{-1} \cdot A_1 B_2^{-1} \in G_1. \label{in5a}
\end{align}
By (\ref{in3a}), we get
\begin{align}
B_2^{-1}A_2 = B_2^{-1}A_1 \cdot A_1^{-1}A_2 \in G_1. \label{in6a}
\end{align}
Since $a_2$ is disjoint from $c_1$, we have $A_2^{-1}C_1 \in G_1$ by (\ref{in1a}). 
From this, (\ref{in2a}) and (\ref{in3a}), we obtain
\begin{align}
A_4^{-1}C_1 = A_4^{-1}A_1 \cdot A_1^{-1}A_2 \cdot A_2^{-1}C_1 \in G_1. \label{in7a}
\end{align}
Since it is easily seen that 
\begin{align*}
B_2^{-1}A_2 \cdot B_2^{-1}A_1 \cdot A_4^{-1}C_1 (a_5,b_2) = (e,b_2),
\end{align*}
by 
(\ref{in8a}), (\ref{in5a}), (\ref{in6a}) and (\ref{in7a}) we have 
\begin{align*}
EB_2^{-1} = (B_2^{-1}A_2 \cdot B_2^{-1}A_1 \cdot A_4^{-1}C_1) A_5B_2^{-1} (B_2^{-1}A_2 \cdot B_2^{-1}A_1 \cdot A_4^{-1}C_1)^{-1} \in G_1. 
\end{align*}
which completes the proof. 
\end{proof}

We prove Theorem~\ref{thm:1} for $p\geq 7$. 
\begin{thm}\label{thm:2}
Set $F:=\sigma_{p-1} \cdots \sigma_2 \sigma_1 E_{p-1} A_{2g} \cdots A_{2} A_1$. 
If $g \geq 3$ and $p \geq 7$, then $\M$ is generated by $A_2 B^{-1} \sigma_2$ and $F$. 
\end{thm}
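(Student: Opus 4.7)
The plan is to let $G := \langle A_2 B^{-1}\sigma_2, F\rangle$, set $Z := A_2 B^{-1}\sigma_2$, and prove $G = \M$ by reducing to Lemma~\ref{lem:1}, then recovering the half-twists, and finally invoking Proposition~\ref{prop:4}. By Proposition~\ref{prop:4} it suffices to show $A_1,\ldots,A_{2g}, B, E_{p-1}, \sigma_1, \ldots, \sigma_{p-1} \in G$.

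The first stage, which is the main content, is to place $A_1 A_2^{-1}$ and $A_1 B^{-1}$ in $G$. I would systematically compute the iterated conjugates $Z_k := F^{-k}ZF^k$. Using the transformation rules $F^{-1}(a_i)=a_{i+1}$ and $F^{-1}(\ell_k)=\ell_{k+1}$ together with the chain $F^{-1}(b)=c_1$, $F^{-1}(c_1)=d_1$, $F^{-1}(d_1)=c_2$ recorded in Section~\ref{section2}, these conjugates take the shape $Z_k = A_{k+2}\,t_{C(k)}^{-1}\,\sigma_{k+2}$, where $C(k)$ runs along the chain of curves beginning at $b$. Explicitly,
\begin{align*}
Z_0 &= A_2B^{-1}\sigma_2, & Z_1 &= A_3C_1^{-1}\sigma_3, \\
Z_2 &= A_4D_1^{-1}\sigma_4, & Z_3 &= A_5C_2^{-1}\sigma_5,
\end{align*}
and similar expressions continue up to $k = p-4$. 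Since $p\geq 7$, there are enough conjugates with $\sigma_{k+2}$ living on an arc $\ell_{k+2}$ disjoint from $\ell_2$ (so $\sigma_2$ and $\sigma_{k+2}$ commute whenever $k \geq 2$) to allow a cancellation: combining $Z_k$'s and their $F$-conjugates, and applying commutativity of the disjointly supported Dehn-twist and half-twist factors, one can eliminate the $\sigma$-pieces and isolate the Dehn-twist word $A_2 B^{-1}$. From this, further manipulations mirroring the calculations in the proof of Lemma~\ref{lem:1} (e.g.\ $A_2 A_3^{-1} = F^{-1}(A_1 A_2^{-1})F$, together with $A_3 C_1^{-1} = F^{-1}(A_2 B^{-1})F$) yield $A_1 A_2^{-1}$ and $A_1 B^{-1}$ inside $G$. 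Lemma~\ref{lem:1} then immediately places $A_1, A_2, \ldots, A_{2g}, B, E_{p-1}$ in $G$.

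For the remaining half-twists: since $A_2, B \in G$, the identity $\sigma_2 = BA_2^{-1}\cdot Z$ puts $\sigma_2$ in $G$. Since $F(\ell_2) = \ell_1$ and $F^{-1}(\ell_k) = \ell_{k+1}$ for $k = 1,\ldots, p-3$, conjugation by $F$ and by $F^{-1}, F^{-2}, \ldots, F^{-(p-4)}$ gives
\begin{align*}
\sigma_1 = F\sigma_2 F^{-1}\in G \quad\text{and}\quad \sigma_{k+2} = F^{-k}\sigma_2 F^k \in G \quad (k = 1, \ldots, p-4),
\end{align*}
so $\sigma_1, \ldots, \sigma_{p-2}\in G$. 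The last half-twist is obtained by solving the defining factorization $F = \sigma_{p-1}\sigma_{p-2}\cdots\sigma_1 E_{p-1} A_{2g}\cdots A_1$ for $\sigma_{p-1}$, whose right-hand side already lies in $G$. Proposition~\ref{prop:4} then gives $G = \M$. The main obstacle is the first stage: cancelling the half-twist $\sigma_2$ from $Z$ using only products and $F$-conjugates of $Z$, which requires careful bookkeeping of which curves and arcs in Figure~\ref{curvesarcs} are disjoint. The assumption $p\geq 7$ enters precisely here, to guarantee enough conjugates $Z_k$ whose supports decouple from $Z$ for the commutation-based cancellation to succeed.
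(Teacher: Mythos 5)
Your overall architecture matches the paper's: conjugate $Z=A_2B^{-1}\sigma_2$ by powers of $F$ to get $Z_3=A_5C_2^{-1}\sigma_5$ (this is exactly where $p\geq 7$ is used, so that $F^{-3}(\ell_2)=\ell_5$), strip off the half-twists to reach the hypotheses of Lemma~\ref{lem:1}, and then recover $\sigma_1,\ldots,\sigma_{p-1}$; your endgame for the half-twists ($\sigma_2=BA_2^{-1}Z$, conjugation by $F$, and solving the factorization of $F$ for $\sigma_{p-1}$) is correct and even a bit cleaner than the paper's. However, the central step is asserted rather than proved, and the mechanism you name for it is not sufficient. Each $Z_k=F^{-k}ZF^k$ carries a single half-twist $\sigma_{k+2}$ about a \emph{distinct} arc, so no product of the $Z_k$'s together with commutativity of disjointly supported factors can make the $\sigma$-pieces cancel: under the projection $\M\to\Sym$ the $Z_k$ map to distinct transpositions, and even a word whose image in $\Sym$ is trivial, such as $Z_0Z_3^{-1}=A_2B^{-1}C_2A_5^{-1}\sigma_2\sigma_5^{-1}$, retains a nontrivial half-twist part. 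The missing idea is to conjugate $Z_3$ by a word in the generators that \emph{fixes} the pair $(a_5,\ell_5)$ while moving $c_2$ to $b$; the paper uses $Z_3Z_0=A_5C_2^{-1}\sigma_5\cdot A_2B^{-1}\sigma_2$, which sends $(a_5,c_2,\ell_5)$ to $(a_5,b,\ell_5)$, to produce $A_5B^{-1}\sigma_5\in G$. Only then does division give a half-twist-free element,
\begin{align*}
BC_2^{-1}=\sigma_5^{-1}BA_5^{-1}\cdot A_5C_2^{-1}\sigma_5,
\end{align*}
because $\ell_5$ is disjoint from both $b$ and $c_2$.

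Moreover, even after this first cancellation the paper does not reach $A_1A_2^{-1}$ and $A_1B^{-1}$ by ``manipulations mirroring Lemma~\ref{lem:1}'' alone: it must first extract some half-twists (it derives $A_1A_4^{-1}\sigma_1\sigma_4^{-1}$, then $A_1A_4^{-1}\sigma_1$, then $\sigma_4$, and hence $\sigma_1,\sigma_2$ by $F$-conjugation) \emph{before} it can split $A_1B^{-1}\sigma_1$ into $A_1B^{-1}$; so the derivation of the Dehn-twist words and of the first few half-twists is necessarily interleaved, not sequential as in your outline. You have correctly identified the crux and where the hypothesis $p\geq 7$ enters, but the conjugation trick that actually effects the cancellation --- and the bookkeeping showing it terminates in $A_1A_2^{-1}$ and $A_1B^{-1}$ --- is the substance of the proof and needs to be supplied.
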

\begin{proof}
Let $G_2$ be the subgroup of $\M$ generated by $A_2 B^{-1} \sigma_2$ and $F$. 
We prove that $G_2=\M$ using Proposition~\ref{prop:4}, that is, $A_1,A_2,\ldots,A_{2g},B,E_{p-1}$ and $\sigma_1$, $\sigma_2,\ldots,\sigma_{p-1}$ are in $G_2$. 
By Lemma~\ref{lem:1}, it is sufficient to show that $A_1A_2^{-1}$, $A_1B^{-1}$ and $\sigma_1,\sigma_2,\ldots,\sigma_{p-1}$ are in $G_2$.

Since $F^{-3} (a_2,b,\ell_2) = (a_5, c_2,\ell_5)$ by the assumption that $p\geq 7$, we obtain $A_5 C_2^{-1} \sigma_5 = F^{-3} A_2 B^{-1} \sigma_2 F^3 \in G_2$. 
It is easily seen that 
\begin{align*}
A_5 C_2^{-1} \sigma_5 \cdot A_2 B^{-1} \sigma_2 (a_5,c_2,\ell_5) = (a_5,b,\ell_5), 
\end{align*}
and we have 
\begin{align}
A_5B^{-1}\sigma_5 = (A_5 C_2^{-1} \sigma_5 \cdot A_2 B^{-1} \sigma_2) A_5 C_2^{-1} \sigma_5 (A_5 C_2^{-1} \sigma_5 \cdot A_2 B^{-1} \sigma_2)^{-1} \in G_2. \label{in13}
\end{align}
In particular, since $\ell_5$ is disjoint from $b,a_5,c_2$, we have 
\begin{align}
BC_2^{-1} = \sigma_5^{-1} B A_5^{-1} \cdot A_5 C_2^{-1} \sigma_5 \in G_2. \label{in14d}
\end{align}
This gives 
\begin{align}
A_2C_2^{-1}\sigma_2 = A_2 B^{-1} \sigma_2 \cdot B C_2^{-1} \in G_2. \label{in14}
\end{align}
By (\ref{in13}), we have $A_2 A_5^{-1} \sigma_2 \sigma_5^{-1} = A_2 B^{-1} \sigma_2 \cdot \sigma_5^{-1} B A_5^{-1}$ in $G_2$. 
Since $F(a_2,a_5,\ell_2,\ell_5) = (a_1,a_4,\ell_1,\ell_4)$, we get 
\begin{align}
A_1 A_4^{-1} \sigma_1 \sigma_4^{-1} = F A_2 A_5^{-1} \sigma_2 \sigma_5^{-1} F^{-1} \in G_2. \label{in15}
\end{align}
Here, it is immediate that 
\begin{align*}
A_2 C_2^{-1} \sigma_2 \cdot A_1 A_4^{-1} \sigma_1 \sigma_4^{-1} (a_2,c_2,\ell_2) = (a_1,c_2,\ell_1). 
\end{align*}
This gives that
\begin{align}
A_1C_2^{-1}\sigma_1 = (A_2 C_2^{-1} \sigma_2 \cdot A_1 A_4^{-1} \sigma_1 \sigma_4^{-1}) A_2 C_2^{-1} \sigma_2 (A_2 C_2^{-1} \sigma_2 \cdot A_1 A_4^{-1} \sigma_1 \sigma_4^{-1})^{-1} \in G_2. \label{in16}
\end{align}
Using this and (\ref{in14d}), we get
\begin{align}
A_1B^{-1}\sigma_1 = A_1 C_2^{-1} \sigma_1 \cdot C_2 B^{-1} \in G_2. \label{in17}
\end{align}
Since we see at once that 
\begin{align*}
A_1B^{-1}\sigma_1 \cdot A_1 A_4^{-1} \sigma_1 \sigma_4^{-1} (a_1,b,\ell_1) = (a_1,a_4,\ell_1), 
\end{align*}
by (\ref{in15}) we have
\begin{align*}
A_1A_4^{-1}\sigma_1 = (A_1B^{-1}\sigma_1 \cdot A_1 A_4^{-1} \sigma_1 \sigma_4^{-1}) A_1B^{-1}\sigma_1 (A_1B^{-1}\sigma_1 \cdot A_1 A_4^{-1} \sigma_1 \sigma_4^{-1})^{-1} \in G_2. 
\end{align*}
Since $a_1,a_4,\ell_1,\ell_4$ are disjoint from each other, by (\ref{in15}) we obtain 
\begin{align*}
\sigma_4 = (A_1 A_4^{-1} \sigma_1 \sigma_4^{-1})^{-1} A_1A_4^{-1}\sigma_1 \in G_2. 
\end{align*}
This gives $\sigma_1,\sigma_2$ in $G$ since $F^{i}(\ell_4)=\ell_{4-i}$ for $i=1,2,3$. 
By (\ref{in14}), (\ref{in16}) and (\ref{in17}), we have 
\begin{align*}
A_2C_2^{-1}, \ A_1C_2^{-1}, \ A_1B^{-1} \in G_2, 
\end{align*}
and therefore 
\begin{align*}
A_1A_2^{-1} = A_1C_2^{-1} \cdot (A_2C_2^{-1})^{-1} \in G_2. 
\end{align*}
Therefore, we see that $A_1,A_2,\ldots,A_{2g},B,E_{p-1}$ are in $G_2$ from Lemma~\ref{lem:1}.

Finally, we show that $\sigma_j \in G_2$ for $j=1,2,\ldots,p-1$. 
From $A_2B^{-1}\sigma_2$ and the definition of $F$, we see that $\sigma_2$ and $\sigma_{p-1}\cdots \sigma_2 \sigma_1$ are in $G_2$. 
Since $(\sigma_{p-1}\cdots \sigma_2 \sigma_1)^{-i}(\ell_2) = \ell_{2+i}$ for $i=-1,0,1,\ldots,p-2$, we have $\sigma_j \in G_2$ for $j=1,2,\ldots,p-1$.

By Lemma~\ref{lem:1} and Proposition~\ref{prop:4}, we see that $G_2 =\M$, and the proof is complete. 
\end{proof}


We show Theorem~\ref{thm:1} for $0\leq p\leq 6$. 
\begin{cor}\label{cor:1}
For $g \geq 3$ and $p \geq 0$, $\M$ is generated by two elements. 
\end{cor}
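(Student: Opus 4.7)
The plan is to close the gap $0 \leq p \leq 6$ left open by Theorem~\ref{thm:2}. For $p=0$ and $p=1$ the result is already known: Wajnryb's theorem (cited in the introduction) produces a two-element generating set for $\mathcal{M}_{g,0}$ and $\mathcal{M}_{g,1}$ whenever $g \geq 2$. It therefore remains to treat the five intermediate cases $p \in \{2,3,4,5,6\}$, which I would handle by adapting the scheme of Theorem~\ref{thm:2}.

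For each such $p$, my approach is to keep the second generator equal to $F$ and to replace the first generator $A_2 B^{-1} \sigma_2$ by an analogous triple product whose supporting curves and arcs shift, under iterated $F^{-1}$, through the valid indexing range. Three constraints have to survive the shifts: the $a$-indices must remain in $\{1,\ldots,2g\}$; the arc-indices must remain in $\{1,\ldots,p-3\}$, beyond which $F^{-1}$ no longer sends $\ell_k$ to $\ell_{k+1}$; and the supporting configuration of curves must realize the same sequence of equalities of curve/arc tuples that was used to isolate $A_1 A_2^{-1}$, $A_1 B^{-1}$, and the individual $\sigma_j$ in the proof of Theorem~\ref{thm:2}. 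Once these three elements are produced inside the candidate subgroup, Lemma~\ref{lem:1} delivers all of $A_1,\ldots,A_{2g}$, $B$, and $E_{p-1}$, and the remaining half twists are then read off from the factor $\sigma_{p-1}\cdots\sigma_2\sigma_1$ visible inside $F$, exactly as in the last paragraph of the proof of Theorem~\ref{thm:2}.

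The main obstacle is precisely the shortness of the arc-orbit for small $p$. In Theorem~\ref{thm:2} the three-step shift $F^{-3}(a_2,b,\ell_2)=(a_5,c_2,\ell_5)$ forces $p\geq 7$ because it requires $\ell_2\mapsto\ell_3\mapsto\ell_4\mapsto\ell_5$ all to be defined. For $p=4,5,6$ one must replace this with a shorter shift (typically $F^{-1}$ or $F^{-2}$) and a lantern relation of a slightly different shape, playing the role of $B_2 A_5 A_3 A_1 = D_1 E B$ in Lemma~\ref{lem:1}. For $p=2,3$ there is essentially no arc-orbit at all, so I would instead choose a first generator containing only $\sigma_1$, rely on $F$-conjugation to move the $a_i$ around, and invoke Proposition~\ref{prop:4} to reduce the task to exhibiting $A_1 A_2^{-1}$, $A_1 B^{-1}$, and $\sigma_1$. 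In each of the five sub-cases the verification is of the same character as in Theorem~\ref{thm:2}, but the exact triple product, the exact sequence of conjugations, and the exact lantern relation have to be worked out individually; the case $p=2,3$ is where the template of Theorem~\ref{thm:2} strains the most and where I would expect the bulk of the routine bookkeeping to live.
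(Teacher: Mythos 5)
There is a genuine gap: for $p \in \{2,3,4,5,6\}$ your proposal is a program, not a proof. You describe what an adapted first generator ``would'' look like, note that the arc-orbit constraints break down, and conclude that ``the exact triple product, the exact sequence of conjugations, and the exact lantern relation have to be worked out individually.'' None of that work is actually done, and it is exactly where the difficulty lives --- indeed for $p=2,3$ you concede the template ``strains the most.'' As written, the statement is established only for $p=0,1$ (via Wajnryb) and $p\geq 7$ (via Theorem~\ref{thm:2}).

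The idea you are missing is that no new generator construction is needed at all. There is a surjective homomorphism $\mathcal{M}_{g,p} \to \mathcal{M}_{g,p-1}$ induced by forgetting the puncture $x_p$, and composing these gives a surjection $\mathcal{M}_{g,7} \to \mathcal{M}_{g,p}$ for every $0 \leq p \leq 6$. A quotient of a two-generated group is generated by (at most) two elements --- just take the images of the two generators --- and since $\M$ is not cyclic the count is exactly two. This is the paper's entire proof of the corollary, and it disposes of all seven low-puncture cases in one line, with no case analysis, no new lantern relations, and no appeal to Wajnryb. Your instinct to rebuild the generating set from scratch in each case is not wrong in principle, but it replaces a one-line quotient argument with five unexecuted verifications of the same delicacy as the proof of Theorem~\ref{thm:2}.
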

\begin{proof}
There is a natural map $\Sigma_{g,p} \to \Sigma_{g,p-1}$ where the puncture $x_p$ is ``forgotten", and this map induces a surjective homomorphism $\mathcal{M}_{g,p} \to \mathcal{M}_{g,p-1}$. 
Using Theorem~\ref{thm:2} and this surjective homomorphism we see that $\mathcal{M}$ is also generated by two elements for $g\geq 3$ and $0\leq p\leq 6$. 
This finishes the proof. 
\end{proof}

\section{Proof of Theorem~\ref{thm:10}}\label{section4}
The next lemma is needed to prove Theorem~\ref{thm:10}. 
\begin{lem}\label{lem:10}
Set $F:=\sigma_{p-1} \cdots \sigma_2 \sigma_1 E_{p-1} A_{2g} \cdots A_{2} A_1$. 
Let $G_3$ be a subgroup of $\EM$ generated by $A_1 A_2^{-1}$, $A_1 B^{-1}$ and $RF$. 
Then, the Dehn twists $A_1,A_2,\ldots,A_{2g},B$ and $E_{p-1}$ are in $G_3$. 
\end{lem}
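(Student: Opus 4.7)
The plan is to mirror the proof of Lemma~\ref{lem:1}, accommodating two key differences: $RF$ is orientation-reversing, so $(RF)^{-1}t_c(RF)=t_{(RF)^{-1}(c)}^{-1}$, and the curves $d_1,c_2,b_2,e$ from Lemma~\ref{lem:1} are replaced by their $R$-reflections $\overline{d}_1,\overline{c}_2,\overline{b}_2,\overline{e}$. First, I will conjugate $A_1B^{-1}$ by successive powers of $RF$:
\[
(RF)^{-1}(A_1B^{-1})(RF) = A_2^{-1}C_1,\; (RF)^{-2}(A_1B^{-1})(RF)^2 = A_3\overline{D}_1^{-1},\; (RF)^{-3}(A_1B^{-1})(RF)^3 = A_4^{-1}\overline{C}_2,
\]
all in $G_3$; by the reflection-symmetric disjointness of the original, this yields $A_2C_1^{-1},A_3\overline{D}_1^{-1},A_4\overline{C}_2^{-1}\in G_3$.

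Next, I will apply $R$ to the lantern $B_2A_5A_3A_1=D_1EB$ of Lemma~\ref{lem:1}, using $Rt_cR^{-1}=t_{R(c)}^{-1}$ and $R(b_2)=\overline{b}_2$, $R(d_1)=\overline{d}_1$, $R(e)=\overline{e}$, to obtain the reflected lantern $\overline{B}_2A_5A_3A_1=\overline{D}_1\overline{E}B$, giving
\[
A_5 = (\overline{D}_1A_3^{-1})(\overline{E}\overline{B}_2^{-1})(BA_1^{-1}).
\]
Thus $A_5\in G_3$ reduces to $\overline{E}\overline{B}_2^{-1}\in G_3$; once $A_5\in G_3$, iterated conjugation by $(RF)^{\pm 1}$ gives $A_1,\ldots,A_{2g},E_{p-1}\in G_3$ (with possibly flipped exponents, which is harmless for group membership), and $B=(A_1B^{-1})^{-1}A_1\in G_3$. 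For $\overline{E}\overline{B}_2^{-1}$, I will use $\overline{\phi}:=R\phi R^{-1}$, where $\phi=B_2^{-1}A_2\cdot B_2^{-1}A_1\cdot A_4^{-1}C_1$ is the conjugator from Lemma~\ref{lem:1} used in $EB_2^{-1}=\phi\cdot A_5B_2^{-1}\cdot\phi^{-1}$. Since $R$ fixes $a_i, c_1$ and sends $b_2\mapsto\overline{b}_2, e\mapsto\overline{e}$, the (orientation-preserving) element $\overline{\phi}$ maps $(a_5,\overline{b}_2)\mapsto(\overline{e},\overline{b}_2)$, yielding $\overline{E}\overline{B}_2^{-1}=\overline{\phi}\cdot A_5\overline{B}_2^{-1}\cdot\overline{\phi}^{-1}$. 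It then suffices to establish the reflected analogs of~(\ref{in2a})--(\ref{in8a}) in $G_3$.

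The hard part will be the telescoping identity $A_1A_{j+2}^{-1}=(A_1A_2^{-1})(A_2A_3^{-1})\cdots(A_{j+1}A_{j+2}^{-1})$ of~(\ref{in2a}): since $(RF)^{-1}$ is orientation-reversing, conjugation sends $A_1A_2^{-1}$ to $A_2^{-1}A_3$ rather than $A_2A_3^{-1}$, so the telescoping fails as stated. The remedy is to use the even shifts $(RF)^{-2k}(A_1A_2^{-1})(RF)^{2k}=A_{2k+1}A_{2k+2}^{-1}$ (orientation-preserving shifts by $2$), combined with the odd-shift elements $A_{2k}^{-1}A_{2k+1}\in G_3$ and the commutativity $[A_i,A_j]=1$ for $|i-j|\geq 2$. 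Since the targets $A_1A_5^{-1},A_1A_6^{-1}$ needed to build $\overline{\phi}$ involve $A_1$ (which commutes with both $A_5$ and $A_6$), sufficient rearrangement is available to derive them, whereupon the reflected analogs of~(\ref{in4a})--(\ref{in7a}) proceed exactly as in Lemma~\ref{lem:1} with bars added to the curves, completing the proof.
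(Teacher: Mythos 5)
Your overall architecture is the right one and matches the paper's: conjugate $A_1B^{-1}$ by powers of $RF$ to collect $A_2^{-1}C_1$, $A_3\overline{D}_1^{-1}$, $A_4^{-1}\overline{C}_2$, reduce $A_5$ to a lantern relation, and then sweep out the remaining twists by $(RF)$-conjugation. But the step you yourself flag as ``the hard part'' is where the proposal actually breaks. From $A_1A_2^{-1}$ and its $(RF)$-conjugates you only obtain the alternating-sign elements $A_1A_2^{-1}$, $A_2^{-1}A_3$, $A_3A_4^{-1}$, $A_4^{-1}A_5,\ldots$, and no amount of commutativity of distant twists lets you telescope these: to cancel the middle letters of $A_1A_2^{-1}\cdot A_2^{-1}A_3$ you would need $A_2$ to commute with $A_1$ or with $A_3$, and it commutes with neither, while the commutations $[A_i,A_j]=1$ for $|i-j|\geq 2$ never touch the offending adjacent pairs. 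So ``even shifts plus rearrangement'' does not yield $A_1A_5^{-1}$ or $A_1A_6^{-1}$, and your conjugator $\overline{\phi}$ is unsupported. The paper supplies the missing idea: use $b$ as a pivot. Since $b$ is disjoint from both $a_2$ and $a_3$, one forms $B^{-1}A_2=(A_1A_2^{-1})^{-1}A_1B^{-1}$, then $A_3^{-1}B=(A_2^{-1}A_3)^{-1}(B^{-1}A_2)^{-1}$, and multiplies to get the correctly signed $A_2A_3^{-1}=B^{-1}A_2\cdot BA_3^{-1}$; only then does conjugation by the orientation-preserving $(RF)^{-2}$ produce $A_4A_5^{-1}$, after which the telescoping $A_jA_6^{-1}=A_jA_{j+1}^{-1}\cdots A_5A_6^{-1}$ goes through.

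A second discrepancy concerns the lantern. You reflect $B_2A_5A_3A_1=D_1EB$ and, since $R(b)=b$, keep $B$ unbarred, so that only $\overline{E}\,\overline{B}_2^{-1}$ remains to be produced. The paper instead uses the lantern $\overline{B}_2A_5A_3A_1=\overline{B}\,\overline{E}\,\overline{D}_1$, where $\overline{B}$ is the twist about a curve $\overline{b}$ that is \emph{not} the generator curve $b$, and it devotes roughly half the proof to establishing $\overline{B}A_1^{-1}\in G_3$ (by conjugating $A_6^{-1}C_1$ by a product of the $A_6A_i^{-1}$). Your shortcut silently assumes that the interior curves of the reflected four-holed sphere are exactly $R(d_1)$, $R(e)$ and $b$, and moreover that $R(d_1)$ coincides with the curve $\overline{d}_1=(RF)^{-2}(b)$ whose twist you controlled in your first step; since $(RF)^{-2}(b)=F^{-1}R(c_1)$ while $R(d_1)=RF^{-1}(c_1)$, these agree only if $R$ and $F^{-1}$ suitably commute on this orbit, which must be checked against the figures. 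The paper's own decision to introduce a separate curve $\overline{b}$ and to prove $\overline{B}A_1^{-1}\in G_3$ nontrivially indicates that this identification is not available in its configuration, so the factor you write as $\overline{D}_1A_3^{-1}$ in the expression for $A_5$ may not be the element you actually showed lies in $G_3$.
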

\begin{proof}
It is easy to check that $(RF)^{-1} (a_1,b) = (a_2,c_1)$, $(RF)^{-1} (a_2,c_1) = (a_3, \overline{d}_1)$ and $(RF)^{-1} (a_3,\overline{d}_1) = (a_4, \overline{c}_2)$. 
This gives $A_2^{-1} C_1 = (RF)^{-1} A_1 B^{-1} (RF)$, $A_3 \overline{D}_1^{-1} = (RF)^{-2} A_1 B^{-1} (RF)^2$ and $A_4^{-1} \overline{C}_2 = (RF)^{-3} A_1 B^{-1} (RF)^3$. 
Therefore, 
\begin{align}
A_2^{-1} C_1, \ A_3 \overline{D}_1^{-1}, \ A_4^{-1} \overline{C}_2 \in G_3. \label{in1b}
\end{align}

Here, we denote by $\overline{E}$, $\overline{B}$ and $\overline{B}_2$ the Dehn twists about simple closed curves $\overline{e}$, $\overline{b}$ and $\overline{b}_2$ as in Figure~\ref{curves}, respectively. 
Then, the lantern relation 
\begin{align*}
\overline{B}_2 A_5 A_3 A_1 = \overline{B}_{} \ \overline{E} \ \overline{D}_1 
\end{align*}
holds, and since $a_1,a_3,a_5,\overline{b}_2$ are disjoint from each other and $\overline{d}_1,\overline{e},\overline{b}$, we rewrite this relation as 
\begin{align*}
A_5 = (\overline{B} A_1^{-1}) (\overline{E}_{} \ \overline{B}_2^{-1}) (\overline{D}_1 A_3^{-1}). 
\end{align*}
If $\overline{B} A_1^{-1}, \ \overline{E}_{} \ \overline{B}_2^{-1} \in G_3$, then we get $A_5 \in G_3$ by $A_3 \overline{D}_1^{-1} \in G_3$ (by (\ref{in1b})). 
Moreover, since $(RF)^{-i}(a_1)=a_{i+1}$ for $i=1,2,\ldots,2g-1$ and $(RF)^{-1}(a_{2g}) = e_{p-1}$, we obtain $A_{5+j}^{\varepsilon} = (RF)^{-j}A_5(RF)^j$ for $j=-4,-3,\ldots,2g-5$ and $E_{p-1} = (RF)^{-2g+4}A_5(RF)^{2g-4}$, where $\varepsilon=-1$ if $j$ is odd and $\varepsilon=1$ if $j$ is even. 
Therefore, if $A_5 \in G_3$, then $A_1,A_2,\ldots,A_{2g},E_{p-1} \in G_3$ and $B = BA_1^{-1} \cdot A_1 \in G_3$. 
From the above argument, it suffices to show that $\overline{B} A_1^{-1}, \ \overline{E}_{} \ \overline{B}_2^{-1} \in G_3$. 
\begin{figure}[hbt]
  \centering
       \includegraphics[scale=.45]{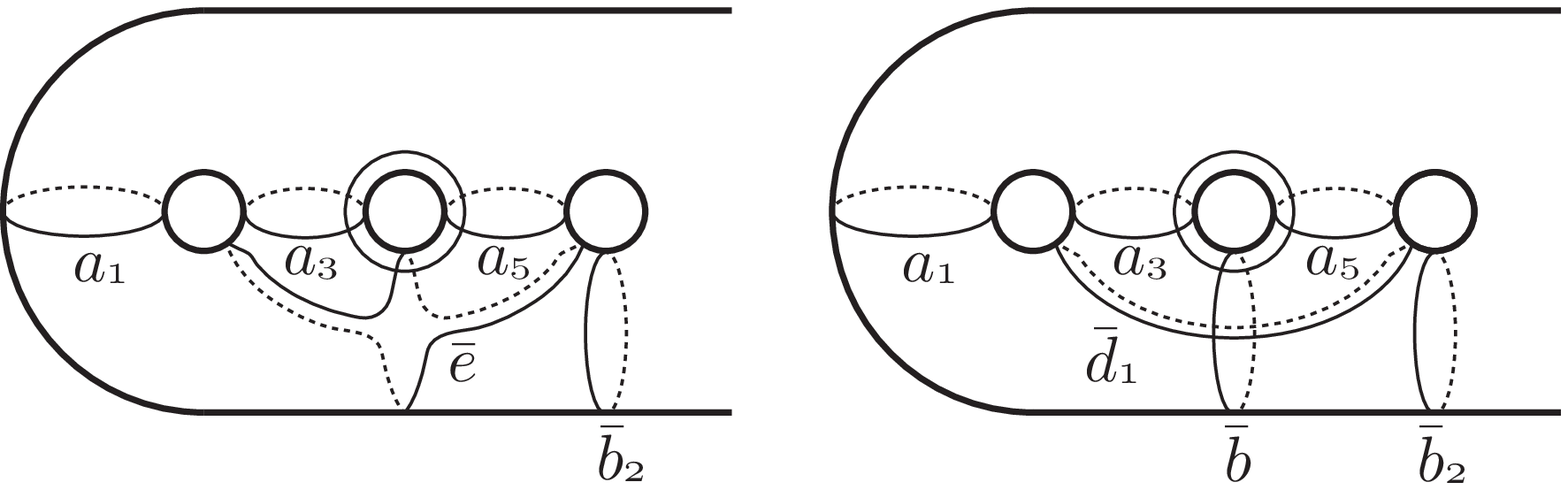}
       \caption{The simple closed curves $a_1,a_3,a_5,\overline{b}_2,\overline{b},\overline{d}_1,\overline{e}$ on $\S$.}
       \label{curves}
  \end{figure}

First, show that $\overline{B} A_1^{-1} \in G_3$. 
Since $(RF)^{-1}(a_i)=a_{i+1}$ for for $i=1,2,\ldots,2g-1$, we see that $A_{j+1}^{\varepsilon}A_{j+2}^{-\varepsilon} = F^{-j} A_1A_2^{-1} F^j$, where $\varepsilon=-1$ if $j$ is odd and $\varepsilon=1$ if $j$ is even. 
This gives 
\begin{align}
A_2^{-1} A_3, \ A_3 A_4^{-1}, \ A_5 A_6^{-1} \in G_3. \label{in2b}
\end{align}
Since $a_2$ is disjoint from $b$, we have $B^{-1}A_2 = A_2B^{-1} = (A_1A_2^{-1})^{-1} A_1B^{-1}$ in $G_3$. 
By (\ref{in2b}), we obtain $A_3^{-1}B = (A_2^{-1}A_3)^{-1} (B^{-1}A_2)^{-1}$ in $G_3$
Since $a_2,a_3$ is disjoint from $b$, we have
\begin{align}
A_2A_3^{-1} = B^{-1}A_2 \cdot (BA_3^{-1}) \in G_3. 
\end{align}
By $(RF)^{-2}(a_2,a_3)=(a_4,a_5)$ and (\ref{in1b}), we get
\begin{align}
A_1 A_2^{-1}, \ A_2 A_3^{-1}, \ A_3 A_4^{-1},\ A_4 A_5^{-1} \ A_5 A_6^{-1} \in G_3. \label{in3b}
\end{align}
Therefore, by $A_jA_{6}^{-1} = A_jA_{j+1}^{-1} \cdots A_4A_5^{-1} \cdot A_5A_6^{-1}$, we have
\begin{align}
A_1 A_6^{-1}, \ A_2 A_6^{-1}, \ A_3 A_6^{-1}, \ A_4 A_6^{-1} \in G_3. \label{in4b}
\end{align}
Since $a_2$ is disjoint from $a_6$, by (\ref{in1b}) and (\ref{in4b}), we obtain
\begin{align}
A_6^{-1}C_1 = A_2A_6^{-1} \cdot A_2^{-1}C_1 \in G_3. \label{in5b}
\end{align}
It is easily seen that 
\begin{align*}
A_6A_4^{-1} \cdot A_6A_3^{-1} \cdot A_6A_2^{-1} \cdot A_6A_1^{-1} (a_6,c_1) = (a_6,\overline{b}). 
\end{align*}
Therefore, by (\ref{in4b}) and (\ref{in5b}) we have 
\begin{align}
A_6^{-1}\overline{B} = (A_6A_4^{-1} \cdot A_6A_3^{-1} \cdot A_6A_2^{-1} \cdot A_6A_1^{-1}) A_6^{-1}C_1 (A_6A_4^{-1} \cdot A_6A_3^{-1} \cdot A_6A_2^{-1} \cdot A_6A_1^{-1})^{-1} \in G_3. \label{in5b}
\end{align}
Since $a_1$ is disjoint from $a_6$, combining (\ref{in4b}) and (\ref{in5b}) we get 
\begin{align*}
A_1^{-1}\overline{B} = (A_1 A_6^{-1})^{-1} A_6^{-1}\overline{B} \in G_3. 
\end{align*}
Since therefore $a_1$ is disjoint from $\overline{b}$, we have
\begin{align}
\overline{B}A_1^{-1}  \in G_3. \label{in6b}
\end{align}

Next, we show that $\overline{E}_{} \ \overline{B}_2^{-1} \in G_3$. 
It is easy to check that 
\begin{align*}
A_6A_1^{-1} \cdot A_5A_1^{-1} \cdot A_4 \overline{C}_2^{-1} (a_6, \overline{b}) = (a_5,\overline{b}_2). 
\end{align*}
Since $A_1A_5^{-1} = A_1A_6^{-1} \cdot (A_5A_6^{-1})^{-1}$ in $G_3$ by (\ref{in3b}) and (\ref{in4b}), we have 
\begin{align*}
A_5^{-1} \overline{B}_2 = (A_6A_1^{-1} \cdot A_5A_1^{-1} \cdot A_4 \overline{C}_2^{-1})  A_6^{-1} \overline{B} (A_6A_1^{-1} \cdot A_5A_1^{-1} \cdot A_4 \overline{C}_2)^{-1} \in G_3
\end{align*}
by (\ref{in1b}) and (\ref{in5b}). 
Here, we have $\overline{B}_2A_2^{-1} = \overline{B}A_1^{-1} \cdot A_1A_2^{-1} \in G_3$ by (\ref{in6b}) and (\ref{in3b}). 
Since $A_3^{-1} A_4 = (RF)^{-1} A_2 A_3^{-1} (RF) \in G_3$ by (\ref{in3b}), we have $C_1^{-1}A_4 = C_1^{-1}A_2 \cdot A_2^{-1}A_3 \cdot A_3^{-1}A_4 \in G_3$ by (\ref{in1b}) and (\ref{in2b}). 
We check at once that 
\begin{align*}
\overline{B}_2A_2^{-1} \cdot \overline{B}_2A_1^{-1} \cdot C_1^{-1}A_4 (a_5, \overline{b}_2) = (\overline{e}, \overline{b}_2). 
\end{align*}
By (\ref{in6b}) we get
\begin{align*}
\overline{E}^{-1}\overline{B}_2 = (\overline{B}_2A_2^{-1} \cdot \overline{B}_2A_1^{-1} \cdot C_1^{-1}A_4) A_5^{-1} \overline{B}_2 (\overline{B}_2A_2^{-1} \cdot \overline{B}A_1^{-1} \cdot C_1^{-1}A_4)^{-1} \in G_3
\end{align*}
Since $\overline{e}$ is disjoint from $\overline{b}_2$, we have
\begin{align*}
\overline{E}_{} \ \overline{B}_2^{-1} \in G_3, 
\end{align*}
which the proof is complete. 
\end{proof}

We prove Theorem~\ref{thm:10} for $p\geq 7$. 
\begin{thm}\label{thm:12}
Set $F:=\sigma_{p-1} \cdots \sigma_2 \sigma_1 E_{p-1} A_{2g} \cdots A_{2} A_1$. 
If $g \geq 3$ and $p \geq 7$, then $\EM$ is generated by $A_2 B^{-1} \sigma_2$ and $RF$. 
\end{thm}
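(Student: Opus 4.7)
The plan is to mirror the proof of Theorem~\ref{thm:2}, replacing $F$ with $RF$ throughout and invoking Lemma~\ref{lem:10} in place of Lemma~\ref{lem:1}. Set $G_4 := \langle A_2 B^{-1}\sigma_2,\, RF\rangle \subseteq \EM$. By Lemma~\ref{lem:10} and Proposition~\ref{prop:4}, it suffices to show that $A_1A_2^{-1}$, $A_1B^{-1}$, and each half twist $\sigma_1,\ldots,\sigma_{p-1}$ belongs to $G_4$.

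The first step is to produce a stock of conjugates of $A_2B^{-1}\sigma_2$ inside $G_4$. Because $RF$ is orientation-reversing, conjugation by $(RF)^{-k}$ obeys the rule
\[(RF)^{-k} t_c (RF)^k = t_{(RF)^{-k}(c)}^{(-1)^k}, \qquad (RF)^{-k}\sigma_\ell (RF)^k = \sigma_{(RF)^{-k}(\ell)}^{(-1)^k},\]
so exponents are preserved for even $k$ and flipped for odd $k$. Using the explicit action of $(RF)^{-1}$ on the standard curves and arcs recorded in Section~\ref{section2} -- e.g.\ $(RF)^{-1}(a_2,b,\ell_2) = (a_3,c_1,\ell_3)$, $(RF)^{-2}(a_2,b,\ell_2) = (a_4,\overline d_1,\ell_4)$, and $(RF)^{-3}(a_2,b,\ell_2) = (a_5,\overline c_2,\ell_5)$ -- we deposit elements such as $A_3^{-1}C_1\sigma_3^{-1}$, $A_4\overline D_1^{-1}\sigma_4$, and $A_5^{-1}\overline C_2\sigma_5^{-1}$ into $G_4$, together with further conjugates.

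The second step mirrors the combinatorics of Theorem~\ref{thm:2}: when two elements of $G_4$ act identically on a configuration of curves and arcs, their product conjugates that configuration to another, and the relation $t_{f(c)}=ft_c^{\pm 1}f^{-1}$ deposits the corresponding Dehn twist or half twist about the image curve into $G_4$. Iterating this, with careful sign compensation whenever an odd power of $RF$ enters, we extract first intermediate products analogous to the $A_jB^{-1}\sigma_j$ of Theorem~\ref{thm:2}, then commutative pieces such as $BC_2^{-1}$ and the $A_jC_2^{-1}$'s, and finally $A_1B^{-1}$ along with $A_1A_2^{-1} = A_1C_2^{-1}(A_2C_2^{-1})^{-1}$. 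Lemma~\ref{lem:10} then places every Dehn twist $A_i$, $B$, $E_{p-1}$ into $G_4$. For the half twists, $\sigma_2 = BA_2^{-1}\cdot A_2B^{-1}\sigma_2\in G_4$, and conjugating $\sigma_2$ by powers of $RF$ (with exponent inversion for odd powers) together with $(RF)^{-i}(\ell_2) = \ell_{2+i}$ for $i = -1,0,\ldots,p-4$ yields $\sigma_1,\sigma_3,\ldots,\sigma_{p-2}\in G_4$. Finally, $R\,\sigma_{p-1}\cdots\sigma_1 = RF\cdot(E_{p-1}A_{2g}\cdots A_1)^{-1}\in G_4$; peeling off the known $\sigma_1,\ldots,\sigma_{p-2}$ and eliminating the residual $R$ via $(RF)^2\in\M$ delivers $\sigma_{p-1}\in G_4$. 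Since $RF\in G_4$ is orientation-reversing and $\M\subseteq G_4$, we conclude $G_4=\EM$.

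The principal obstacle is the sign bookkeeping: the curve-level calculations of Theorem~\ref{thm:2} translate essentially verbatim, but each conjugation by an odd power of $RF$ inverts exponents and must be compensated in the subsequent algebraic manipulations. This is precisely why Lemma~\ref{lem:10} is stated with $A_2^{-1}C_1$, $A_3\overline D_1^{-1}$, $A_4^{-1}\overline C_2$ rather than their $F$-analogs from Lemma~\ref{lem:1}, and it is why extracting $\sigma_{p-1}$ requires an extra maneuver to cancel the orientation-reversing residue that is not present in the $\M$ proof.
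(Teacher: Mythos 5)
Your overall route is exactly the paper's: set $G_4=\langle A_2B^{-1}\sigma_2,\,RF\rangle$, reduce via Lemma~\ref{lem:10} and Proposition~\ref{prop:4} to showing that $A_1A_2^{-1}$, $A_1B^{-1}$ and the half twists lie in $G_4$, and rerun the combinatorics of Theorem~\ref{thm:2} while tracking the exponent flips caused by odd powers of the orientation-reversing element $RF$. The seed conjugates you list ($A_3^{-1}C_1\sigma_3^{-1}$, $A_4\overline{D}_1^{-1}\sigma_4$, $A_5^{-1}\overline{C}_2\sigma_5^{-1}$) and the subsequent extraction of $A_2\overline{C}_2^{-1}$, $A_1\overline{C}_2^{-1}$, $A_1B^{-1}$ and $A_1A_2^{-1}$ agree with the paper's computation; note only that the auxiliary curve reached by $(RF)^{-3}$ is $\overline{c}_2$, not $c_2$, so the commuting pieces are $B\overline{C}_2^{-1}$ and $A_j\overline{C}_2^{-1}$ rather than the $C_2$-versions you wrote in passing.

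The one step that fails as written is your extraction of $\sigma_{p-1}$. After peeling $\sigma_1,\ldots,\sigma_{p-2}$ off $W:=R\sigma_{p-1}\cdots\sigma_1\in G_4$ you are left with $R\sigma_{p-1}\in G_4$, and there is no way to ``eliminate the residual $R$'' from this element: since $R(\ell_{p-1})=\ell_{p-1}$ and $R$ is orientation-reversing, the relation $\sigma_{R(\ell)}=R\sigma_\ell^{-1}R^{-1}$ gives $R\sigma_{p-1}R^{-1}=\sigma_{p-1}^{-1}$, hence $(R\sigma_{p-1})^2=1$, and squaring produces nothing. The fact that $(RF)^2\in\M$ is true but gives no handle on $R$ itself, and $R=RF\cdot F^{-1}$ is unusable because $F$ contains the as-yet-unknown $\sigma_{p-1}$. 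The repair is what the paper does: instead of conjugating $\sigma_2$ by powers of $RF$ (which indeed stalls at $\ell_{p-2}$ because $\ell_{p-1}$ meets $e_{p-1}$), conjugate $\sigma_2$ by powers of $W$ itself, which lies in $G_4$ once the $A_i$ and $E_{p-1}$ do. Since $W$ acts on the punctures as a $p$-cycle and $(W)^{-i}(\ell_2)=\ell_{2+i}^{\pm1}$ for $i=-1,0,\ldots,p-2$, this reaches every $\ell_j$, including $\ell_{p-1}$, in one stroke. With that substitution your argument matches the paper's proof.
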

\begin{proof}
Let $G_4$ be the subgroup of $\EM$ generated by $A_2 B^{-1} \sigma_2$ and $RF$. 
We prove that $G_4=\EM$ using Proposition~\ref{prop:4}, that is, $A_1,A_2,\ldots,A_{2g},B,E_{p-1}$ and $\sigma_1$, $\sigma_2,\ldots,\sigma_{p-1}$ are in $G_4$. 
Since $RF$ is orientation reversing diffeomorphism on $\S$, by Lemma~\ref{lem:10} and the exact sequence
\begin{align*}
1 \to \M \to \EM \to \mathbb{Z}/2\mathbb{Z} \to 1, 
\end{align*}
it is sufficient to show that $A_1A_2^{-1}$, $A_1B^{-1}$ and $\sigma_1,\sigma_2,\ldots,\sigma_{p-1}$ are in $G_4$.

Since $(RF)^{-3} (a_2,b,\ell_2) = (a_5, \overline{c}_2,\ell_5)$ by the assumption that $p\geq 7$, we obtain $A_5^{-1} \overline{C}_2 \sigma_5^{-1} = (RF)^{-3} A_2 B^{-1} \sigma_2 (RF)^3 \in G_4$. 
Note that $A_5 \overline{C}_2^{-1} \sigma_5 \in G_4$ since $a_5,\overline{c}_2,\ell_5$ are disjoint from each other. 
Since it is easily seen that 
\begin{align*}
A_5 \overline{C}_2^{-1} \sigma_5 \cdot A_2 B^{-1} \sigma_2 (a_5,\overline{c}_2,\ell_5) = (a_5,b,\ell_5), 
\end{align*}
we have 
\begin{align}
A_5B^{-1}\sigma_5 = (A_5 \overline{C}_2^{-1} \sigma_5 \cdot A_2 B^{-1} \sigma_2) A_5 \overline{C}_2^{-1} \sigma_5 (A_5 \overline{C}_2^{-1} \sigma_5 \cdot A_2 B^{-1} \sigma_2)^{-1} \in G_4. \label{in13b}
\end{align}
In particular, since $\ell_5$ is disjoint from $b,a_5,c_2$, we have 
\begin{align}
B\overline{C}_2^{-1} = \sigma_5^{-1} B A_5^{-1} \cdot A_5 \overline{C}_2^{-1} \sigma_5 \in G_4. \label{in14c}
\end{align}
This gives 
\begin{align}
A_2 \overline{C}_2^{-1} \sigma_2 = A_2 B^{-1} \sigma_2 \cdot B \overline{C}_2^{-1} \in G_4. \label{in14b}
\end{align}
By (\ref{in13b}), we have $A_2 A_5^{-1} \sigma_2 \sigma_5^{-1} = A_2 B^{-1} \sigma_2 \cdot \sigma_5^{-1} B A_5^{-1}$ in $G_4$. 
Since $RF(a_2,a_5,\ell_2,\ell_5) = (a_1,a_4,\ell_1,\ell_4)$, we get 
\begin{align}
A_1^{-1} A_4 \sigma_1^{-1} \sigma_4 = (RF) A_2 A_5^{-1} \sigma_2 \sigma_5^{-1} (RF)^{-1} \in G_4. \label{in15b}
\end{align}
Here, it is immediate that 
\begin{align*}
\sigma_2^{-1} \overline{C}_2 A_2^{-1} \cdot A_1^{-1} A_4 \sigma_1^{-1} \sigma_4 (a_2,\overline{c}_2,\ell_2) = (a_1,\overline{c}_2,\ell_1). 
\end{align*}
By (\ref{in14b}) and (\ref{in15b}), we obtain
\begin{align}
A_1 \overline{C}_2^{-1} \sigma_1 = (\sigma_2^{-1} \overline{C}_2 A_2^{-1} \cdot A_1^{-1} A_4 \sigma_1^{-1} \sigma_4) A_2 \overline{C}_2^{-1} \sigma_2 (\sigma_2^{-1} \overline{C}_2 A_2^{-1} \cdot A_1^{-1} A_4 \sigma_1^{-1} \sigma_4)^{-1} \in G_4. \label{in16b}
\end{align}
Using this and (\ref{in14c}), we get
\begin{align}
A_1B^{-1}\sigma_1 = A_1 \overline{C}_2^{-1} \sigma_1 \cdot \overline{C}_2 B^{-1} \in G_4. \label{in17b}
\end{align}
Since we see at once that 
\begin{align*}
\sigma_1^{-1} B A_1^{-1} \cdot A_1^{-1} A_4 \sigma_1^{-1} \sigma_4  (a_1,b,\ell_1) = (a_1,a_4,\ell_1), 
\end{align*}
by (\ref{in15b}) we have
\begin{align*}
A_1A_4^{-1}\sigma_1 = (\sigma_1^{-1} B A_1^{-1} \cdot A_1^{-1} A_4 \sigma_1^{-1} \sigma_4) A_1B^{-1}\sigma_1 (\sigma_1^{-1} B A_1^{-1} \cdot A_1^{-1} A_4 \sigma_1^{-1} \sigma_4)^{-1} \in G_4. 
\end{align*}
Since $a_1,a_4,\ell_1,\ell_4$ are disjoint from each other, by (\ref{in15b}) we obtain 
\begin{align*}
\sigma_4 = A_1^{-1} A_4 \sigma_1^{-1} \sigma_4 \cdot  A_1A_4^{-1}\sigma_1 \in G_4. 
\end{align*}
This gives $\sigma_1,\sigma_2$ in $G$ since $(RF)^{i}(\ell_4)=\ell_{4-i}$ for $i=1,2,3$. 
By (\ref{in14b}), (\ref{in16b}) and (\ref{in17b}), we have 
\begin{align*}
A_2\overline{C}_2^{-1}, \ A_1\overline{C}_2^{-1}, \ A_1B^{-1} \in G_4, 
\end{align*}
and therefore 
\begin{align*}
A_1A_2^{-1} = A_1\overline{C}_2^{-1} \cdot (A_2\overline{C}_2^{-1})^{-1} \in G_4. 
\end{align*}
Therefore, we see that $A_1,A_2,\ldots,A_{2g},B,E_{p-1}$ are in $G_4$ from Lemma~\ref{lem:1}.

Finally, we show that $\sigma_j \in G_4$ for $j=1,2,\ldots,p-1$. 
From $A_2B^{-1}\sigma_2$ and the definition of $RF$, we see that $\sigma_2$ and $R\sigma_{p-1}\cdots \sigma_2 \sigma_1$ are in $G_4$. 
Since $(R\sigma_{p-1}\cdots \sigma_2 \sigma_1)^{-i}(\ell_2) = \ell_{2+i}^{\pm1}$ for $i=-1,0,1,\ldots,p-2$, we have $\sigma_j \in G_4$ for $j=1,2,\ldots,p-1$.

By Lemma~\ref{lem:10} and Proposition~\ref{prop:4}, we see that $G_4 =\EM$, and the proof is complete. 
\end{proof}

The proof for Theorem~\ref{thm:10} for $0\leq p\leq 6$ is similar to that of Corollary~\ref{cor:1}. 
This proves Theorem~\ref{thm:10}.

\section{Remarks}\label{section5}
In this section, we comment on minimal generating sets for $\M$ and $\EM$ of low genus cases and commutator generating sets for $\M$ and $\EM$. 
Moreover, we state some questions on torsion and involution generating sets for $\M$ and $\EM$.

\subsection{Minimal generating sets for $\M$ and $\EM$ of low genus cases}
We see that $\mathcal{M}_{1,2}$ is generated by two elements $\sigma_1A_1A_2E_1$ and $A_1$. 
Similarly, $\mathcal{M}_{1,2}^\pm$ is generated by two elements $R'A_1A_2E_1$ and $A_1$, where $R'$ is a reflection such that $R'(e_1)=e_1$, $R'(a_i)=a_i$ for $i=1,2$ and $R(x_1)=x_2$. 
The proofs are left to the reader. 
For the reason, we expect that the same holds for $g=1$ and $p\geq 3$ or $g\geq 2$ and $p\geq 2$. 
\begin{ques}
Are $\M$ and $\EM$ generated by two elements if $g=1$ and $p\geq 3$ or $g\geq 2$ and $p\geq 2$?
\end{ques}

\subsection{Commutator generating sets for $\MO$ and $\EMO$}
Powell \cite{Po} showed that $\MO$ is a perfect group if $g \geq 3$, that is, $\MO=[\MO,\MO]$, where $[\MO,\MO]$ is the commutator subgroup of $\MO$. 
It is well-known that the same result holds for $\M$ if $g\geq 3$ and $p=1$. 
Baykur and Korkmaz \cite{BK} showed that $\MO$ is generated by two commutators if $g\geq 5$ and by three commutators if $g\geq 3$. 
However, if $g=1,2$ and $p\geq 0$, then we can not obtain any generating set for $\M$ consisting of commutators since $\M$ is not perfect. 
Similarly, $\M$ can not be generated by commutators for $g\geq 0$ and $p\geq 2$ since there is a surjective homomorphism $\M \to \Sym \to \mathbb{Z}/2\mathbb{Z}$. 
In a similar manner, we see that the same result holds for $\EM$ for $g\geq0$ and $p\geq 0$.

\subsection{Torsion generating sets for $\M$ and $\EM$}
Maclachlan \cite{Mac} proved that $\MO$ is generated by torsion elements, and as a corollary they showed that the moduli space is simply connected as a topological space. 
Luo \cite{Luo} gave a finite generating set for $\M$ consisting of torsion elements for $g\geq 3$ and $p\geq 0$, and also showed that $\M$ is generated by torsions if and only if $(g,p) \neq (2,5k+4)$ for some integer $k$. 
The cardinality of the generating set depends linearly on both $g$ and $p$. 
In the paper, he asked whether there is a universal upper bound, independent of $g$ and $p$, for the number of torsion elements needs to generate $\M$. 
Brendle and Farb \cite{BF} answered Luo's question by constructing a generating set of $\MO$ consisting of three elements of order $2g + 2$, $4g + 2$ and $2$ (or $g$) for $g\geq 1$. 
Korkmaz \cite{Ko1} showed that $\MO$ is generated by two torsion elements, each of order $4g + 2$ for $g\geq 3$ and $p=0,1$, and therefore his torsion generating set is minimal. 
The author \cite{Mo3} proved that $\MO$ is generated by three elements of order three and by four elements of order four for $g\geq3$. 
After that Lanier \cite{La} showed $\MO$ is generated by three elements of order $k\geq 6$ for $g\geq (k-1)^2+1$. 
Yoshihara \cite{Yo} showed that $\MO$ is generated by three elements of order $6$ if $g\geq 10$ and by four elements of order $6$ if $g\geq 5$. 
Yildiz \cite{Yi2} improved the result of \cite{Ko1} by showing that $\MO$ is generated by two elements of order $g$ for $g\geq 6$. 
From the above results, it is natural to ask the following:
\begin{ques}
Is $\M$ generated by two torsion elements for $g\geq 3$ and $p\geq 2$. 
\end{ques}

A minimal torsion generating sets for $\EMO$ were given by Du \cite{Du1,Du2}. 
He showed that $\EMO$ is generated two torsion elements for $g\geq 3$ and that $\mathcal{M}^\pm_1$ cannot be generated by two torsions. 
On the other hand, it is still unknown for $g=2$. 
\begin{ques}
Is $\mathcal{M}^\pm_2$ generated by two torsion elements?
\end{ques}
Altunoz, Pamuk and Yildiz \cite{APY} gave a generating set for $\EM$ consisting three torsion elements of order $g$ for $g\geq 6$ and $p=ng,ng+1,ng+2$, where $n$ is a non-negative integer. 
We do not know whether an analogue of Du's result holds for $\EM$. 
\begin{ques}
Is $\EM$ generated by two torsion elements for $g\geq 3$ for $p\geq 1$. 
\end{ques}

\subsection{Involution generating sets for $\M$}
McCarthy and Papadopoulos \cite{MP} proved that $\MO$ is generated by infinitely many conjugates of a single involution for $g \geq 3$.
Actually, the above mentioned Luo's generating set for $\M$ consists of all involutions for $g\geq 3$. 
Therefore, we can consider the involution version of Luo's question. 
The answer was also given by Brendle and Farb \cite{BF} by showing that $\M$ is generated by six involutions for $g\geq 3$ and $p=0$, and for $g\geq 4$ and $p=1$. 
In the case of $p\geq 2$, Kassabov \cite{Ka} showed that $\M$ is generated by four involutions if $g > 7$ or $g = 7$ and $p$ is even, five involutions if $g > 5$ or $g = 5$ and $p$ is even, and 6 involutions if $g > 3$ or $g = 3$ and $p$ is even. 
The assumption that $p$ is even was removed by the author \cite{Mo2} for $g=7$ and $g=5$. 
Since $\M$ contains nonabelian free groups, it cannot be generated by two involutions. 
Thus, any set of involution generators for $\M$ must have at least three elements. 
Korkmaz \cite{Ko2} proved that $\MO$ is generated by three involutions if $g\geq 8$ and four involutions if $g\geq 3$, answering a question in \cite{BF}. 
Recently, the result was improved to $g\geq 6$ by Yildiz \cite{Yi1}. 
Based on the above works, we would like to ask the following question:
\begin{ques}
Is $\M$ generated by three involutions for $g\geq 3$ and $p\geq 0$. 
\end{ques}
Note that if $g=1,2$, then $\MO$ cannot be generated by involutions for homological reasons. 
Therefore, $\M$ also cannot be generated by involutions for $g=1,2$ since there is a surjective homomorphism from $\M$ to $\MO$ (see \cite{Bi}).

By a similar reason to $\M$, $\EM$ can not generated by two involutions. 
Stukow \cite{St} gave a minimal involution generating set for $\EMO$ for $g\geq 1$, that is, he showed that $\EMO$ is generated by three involutions. 
In \cite{APY}, they also showed that $\EM$ is generated by four involutions for $g\geq 3$ and $p\geq 0$. 
Hence, a natural question arises:
\begin{ques}
Is $\EM$ generated by three involutions for $g\geq 1$ and $p\geq 0$. 
\end{ques}

\end{document}